\renewcommand\@makefnmark{%
  \hbox{\@textsuperscript{\normalfont\color{black}\@thefnmark}}}
\DeclareMathAlphabet\mathcalbf{OMS}{cmsy}{b}{n}
\DeclareMathAlphabet\EuScript{U}{eus}{m}{n}
\DeclareMathAlphabet\EuScriptBold{U}{eus}{b}{n}
\numberwithin{equation}{section}
\newtheorem{theorem}{Theorem}[section]
\newtheorem{lemma}[theorem]{Lemma}
\newtheorem{proposition}[theorem]{Proposition}
\newtheorem{definition}[theorem]{Definition}
\newtheorem{remark}[theorem]{Remark}
\newcommand{\Cine}{\EuScript C_\epsilon}
\newcommand{\Sopo}{\EuScript S_\omega}
\newcommand{\SopO}{\EuScript S_{\Ot}}
\newcommand{\Op}{\Omega_p}
\newcommand{\Ot}{\Omega_2}
\newcommand{\Tinv}{ (\mathcal T^s_\epsilon)^{-1}}
\def\C{\mathbb C}
\begin{document}
\allowdisplaybreaks

\title[Commutator of the Cauchy--Szeg\H{o} Projection]
{The commutator of the Cauchy--Szeg\H{o} Projection \\ for domains in $\mathbb C^n$ with minimal smoothness:\\ 
 weighted regularity}

\author{Xuan Thinh Duong, Loredana Lanzani, Ji Li and Brett D. Wick}

\address{Xuan Thinh Duong, Department of Mathematics, Macquarie University, NSW, 2109, Australia}
\email{xuan.duong@mq.edu.au}

\address{Loredana Lanzani,  University of Bologna, Piazza di Porta S. Donato 5, 40126 Bologna, Italy}
\email{loredana.lanzani2@unibo.it}

\address{Ji Li, Department of Mathematics, Macquarie University, NSW, 2109, Australia}
\email{ji.li@mq.edu.au}

\address{Brett D. Wick, Department of Mathematics \& Statistics\\
         Washington University - St. Louis\\
         St. Louis, MO 63130-4899 USA
         }
\email{wick@math.wustl.edu}

\subjclass[2020]{32A25, 32A26,  32A50, 32A55, 32T15, 42B20, 42B35}
\keywords{Cauchy--Szeg\H{o} projection, Szeg\H o projection, orthogonal projection, Cauchy transform, domains in $\C^n$ with minimal smoothness, commutator, boundedness and compactness, space of homogeneous type, strongly pseudoconvex}

\begin{abstract}
Let $D\subset\C^n$ be a bounded, strongly pseudoconvex domain whose boundary $bD$ satisfies the minimal regularity condition of class $C^2$, and let $\Sopo$  denote
the Cauchy--Szeg\H{o} projection defined with respect to (any)
positive continuous multiple $\omega$ of induced Lebesgue measure for the boundary of $D$. We
   characterize compactness and boundedness  (the latter with explicit bounds) of the commutator $[b, 
\Sopo]$ in the Lebesgue space $L^p(bD, \Op)$ where $\Omega_p$ is any measure  in the  Muckenhoupt class $A_p(bD)$, $1<p<\infty$.
  We next fix $p =2$ and we let  $\SopO$  denote the Cauchy--Szeg\H{o} projection defined with respect to (any) 
 measure $\Ot \in
  A_2(bD)$, which is the largest class of reference measures for which a meaningful notion of Cauchy-Leray measure may be defined.  
 We characterize
 boundedness and compactness  in $L^2(bD, \Ot)$ of the commutator 
 $\displaystyle{[b,\SopO]}$.
 \end{abstract}

\maketitle

\centerline{\em Dedicated to Jill Pipher}
\vskip0.1in

\section{Introduction}
This is a companion paper to the recent work \cite{DLLW} where the weighted Lebesgue-space regularity
problem was studied for the Cauchy-Szeg\H o projection $\Sopo$ of a 
 strongly pseudoconvex domain $D\Subset \mathbb C^n$ that satisfies the minimal regularity condition of class $C^2$. 
  The reference measure in the definition of $\Sopo$ is taken to be
 $\omega := \Lambda \, \sigma$ (any) bounded, positive continuous multiple of the induced Lebesgue measure $\sigma$ (and we henceforth refer to any such $\omega$ as a {\em Leray Levi-like measure}), whereas the measures $\Omega_p$ with respect to which the weighted $L^p(bD, \Op)$-regularity of $\Sopo$ is established 
 in \cite{DLLW}, 
 belong to the maximal
  class of the Muckenhoupt measures $\{A_p(bD)\}_{1<p<\infty}$.
 \vskip0.1in
 
 In this paper we study the behavior in  $L^p(bD, \Op)$ of the commutator $[b, \Sopo]$. Specifically, we identify suitable conditions on the symbol $b$ 
  for which regularity and compactness of $[b, \Sopo]$ occur in 
 $L^p(bD, \Op)$ for any Muckenhoupt measure $\Op$, $1<p<\infty$, and we provide explicit bounds, see  \eqref{Lp-est-1} and \eqref{Lp-est-2} below. Doing so will also require studying the commutator $[b, \Cine]$ for the family $\{\Cine\}_\epsilon$ of Cauchy-type integral operators that were studied in \cite{DLLW} and \cite{LS2017}. 
 To be precise, letting $[\Op]_{A_p}$ denote the $A_p$-character of $\Op$, we have
 
  \begin{theorem}\label{cauchy4} Let $D\subset \mathbb C^n$, $n\geq 2$, be  a bounded, strongly pseudoconvex domain of class $C^2$.
 The following hold for any $b\in L^2(bD, \sigma )$ and for any Leray Levi-like measure $\omega$:

$(1)$  if $b\in{\rm BMO}(bD,\sigma)$ then the commutator $[b, \Sopo]$ is bounded on  $L^p(bD, \Op)$ for any $1<p<\infty$ and any $A_p$-measure $\Op$, with
\begin{equation}\label{Lp-est-1}
\|[b, \Sopo]\|_{{L^p(bD, \Op)\to L^p(bD, \Op)}}\lesssim \|b\|_{{\rm BMO}(bD,\sigma)} [\Op]_{A_p}^{ 4\cdot\max\{1,{1\over p-1}\}},
\end{equation}
where the implied constant depends on $p$, $D$ and $\omega$ {but are independent of $\Op$}. 

\vskip0.05in
 \hskip.6cmConversely, if {  $[b, \Sopo]$ and $[b, \Cine](I-\Sopo)$} are bounded on  $L^p(bD, \Op)$ for some $p\in (1, \infty)$ and for some $A_p$-measure $\Op$, then the symbol $b$ is in ${\rm BMO}(bD,\sigma)$ with
 \begin{align}\label{Lp-est-2}
 \|b\|_{{\rm BMO}(bD,\sigma)} &\lesssim c_\epsilon [\Op]_{A_p}^{  \max\{1,{1\over p-1}\}} 
 \|[b,  \Sopo ]\|_{L^p(bD, \Omega_p)\to L^p(bD, \Omega_p)} \\
 &\qquad+\|[b,  \Cine ](I-\Sopo)\|_{L^p(bD, \Omega_p)\to L^p(bD, \Omega_p)},\nonumber
 \end{align}
where the implied constant depends on $p$, $D$, $\omega$ and $[\Op]_{A_p}$.

 \vskip0.1in

$(2)$ if  $b\in{\rm VMO}(bD,\sigma)$ then the commutator $[b, \Sopo]$ is compact on 
 $L^p(bD, \Op)$ for all $1<p<\infty$ and all $A_p$-measures $\Op$. Conversely, if $[b, \Sopo]$ and and $[b, \Cine](I-\Sopo)$ are compact on  $L^p(bD, \Op)$ for some $p\in (1, \infty)$ and for some $A_p$-measure $\Op$, then the symbol $b$ is in ${\rm VMO}(bD,\sigma)$.
\end{theorem}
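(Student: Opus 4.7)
The plan is to convert the sharp $L^2(bD,\Ot)$-bound of Theorem~\ref{T:Szego-for-A_p} into a sharp $L^p(bD,\Op)$-bound for $\Sopo$ via quantitative extrapolation, and then transfer this bound to the commutator by the exponential-conjugation (``Cauchy integral'') trick of Chung--Pereyra--P\'erez. For the converse direction I would exploit the off-diagonal non-degeneracy of the Cauchy--Szeg\H o kernel together with a testing-on-twin-balls argument, and for compactness a BMO-approximation argument combined with weak-convergence testing.

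\emph{Sufficiency in (1).} Applying \cite[Theorem~9.5.3]{Gra} to \eqref{E:extrapol-Sz} first yields $\|\Sopo\|_{L^p(bD,\Op)\to L^p(bD,\Op)}\lesssim [\Op]_{A_p}^{\max\{1,1/(p-1)\}}$ for every $A_p$-measure $\Op$. Given $b\in\mathrm{BMO}(bD,\sigma)$, set $T_z f:=e^{zb}\Sopo(e^{-zb}f)$. Using the John--Nirenberg inequality on the space of homogeneous type $(bD,\sigma)$ equipped with the Koranyi-type quasi-metric of \cite{LS2017}, there exists $\epsilon_0>0$ so that whenever $|z|\le\epsilon_0\|b\|_{\mathrm{BMO}}^{-1}[\Op]_{A_p}^{-\max\{1,1/(p-1)\}}$ the perturbed density $e^{p\,\mathrm{Re}(zb)}\phi_p$ is still an $A_p$-weight with $A_p$-character at most $2[\Op]_{A_p}$. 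Feeding this into the extrapolated bound produces a uniform $L^p(\Op)$-estimate for $T_z$ on the admissible disk, and then writing
\begin{equation*}
[b,\Sopo]f=\frac{1}{2\pi i}\oint_{|z|=r}\frac{T_z f}{z^2}\,dz
\end{equation*}
and optimising $r$ yields \eqref{Lp-est-1} with the expected squared exponent.

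\emph{Necessity in (1).} Here I would adapt the off-diagonal kernel / twin-ball testing method from \cite{DLLWW} developed for the Cauchy--Leray integrals. Through the factorisation \eqref{E:LS-1}, $\Sopo$ inherits from $\Cine$ a non-degenerate pointwise size on pairs of well-separated surface balls $B,B^*\subset bD$. Testing $[b,\Sopo]$ against $\chi_{B^*}$ and pairing with a sign-normalised function supported on $B$ produces a quantity of order $\sigma(B)^{-1}\int_B|b-b_B|\,d\sigma$, which upon converting between $L^p(bD,\Op)$ and $L^p(bD,\sigma)$ on a single ball (absorbing a single factor of $[\Op]_{A_p}^{\max\{1,1/(p-1)\}}$) delivers \eqref{Lp-est-2}.

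\emph{Compactness in (2), and main obstacle.} For the ``if'' direction I would approximate $b\in\mathrm{VMO}(bD,\sigma)$ in BMO by a sequence $b_k$ of Lipschitz (or $C^1$) symbols for which $[b_k,\Sopo]$ is compact on $L^p(bD,\Op)$ via Fr\'echet--Kolmogorov (the Lipschitz symbol generates enough extra off-diagonal decay); \eqref{Lp-est-1} applied to $b-b_k$ then forces $[b_k,\Sopo]\to[b,\Sopo]$ in operator norm, preserving compactness. For the converse, if $b\notin\mathrm{VMO}$ we construct normalised, weakly null sequences $\{f_n\}$ supported on twin balls $B_n^*$ where the mean oscillation of $b$ is bounded below, and the same testing used in the BMO lower bound gives $\|[b,\Sopo]f_n\|_{L^p(\Op)}\not\to 0$, contradicting compactness. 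The chief technical difficulty throughout is the quantitative exponential-conjugation step: because $\Sopo$ is \emph{not} known to be a Calder\'on--Zygmund operator in the minimal-smoothness regime, the whole commutator theory has to be driven by the weighted $L^p$-bound for $\Sopo$ itself, and one needs a form of John--Nirenberg on $(bD,\sigma)$ sufficiently explicit to control the $A_p$-character of the perturbed weight $e^{p\,\mathrm{Re}(zb)}\phi_p$ uniformly in $[\Op]_{A_p}$. A secondary difficulty is extracting the off-diagonal non-degeneracy of the kernel of $\Sopo$ from the operator-level factorisation \eqref{E:LS-1}, as no pointwise asymptotic expansion of the kernel is available here.
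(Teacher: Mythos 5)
Your sufficiency argument for \eqref{Lp-est-1} is a legitimate and genuinely different route from the paper's: rather than proving a sharp $L^2(bD,\Ot)$ bound for $[b,\Sopo]$ directly (which is what the paper does, via the operator identity \eqref{E:KS-comm-p} and the weighted estimates for $[b,\Cine]$, followed by extrapolation), you extrapolate the bound of Theorem \ref{T:Szego-for-A_p} to all $p$ first and then run the Chung--Pereyra--P\'erez conjugation $T_z f=e^{zb}\Sopo(e^{-zb}f)$ with the Cauchy integral formula. This only needs the sharp weighted bound for $\Sopo$ itself plus the John--Nirenberg-based perturbation lemma for $e^{p\,\mathrm{Re}(zb)}\phi_p$ on the homogeneous space $(bD,{\tt d},\sigma)$, both of which are available; optimising $r\sim \|b\|_{\mathrm{BMO}}^{-1}[\Op]_{A_p}^{-\max\{1,1/(p-1)\}}$ does yield the squared exponent. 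This is arguably cleaner than the paper's three-term decomposition and bypasses the weighted commutator theory for $\Cine$ entirely in the forward direction.

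The converse directions, however, contain a genuine gap. You assert that ``through the factorisation \eqref{E:LS-1}, $\Sopo$ inherits from $\Cine$ a non-degenerate pointwise size on pairs of well-separated balls,'' and your necessity and compactness arguments (twin-ball testing of $[b,\Sopo]$, Fr\'echet--Kolmogorov for Lipschitz symbols, weak-null testing) all presuppose a pointwise kernel representation, indeed a kernel \emph{lower} bound, for $\Sopo$. No such information is available in the $C^2$ setting --- this is precisely the obstruction the paper is built around --- and it cannot be extracted from \eqref{E:LS-1}: the factor $\big[I-(\Cine^\dagger-\Cine)\big]^{-1}$ is a Neumann series with no pointwise control from below, so the operator-level identity does not transfer the non-degeneracy \eqref{gwz1} of $C^\sharp_\epsilon(w,z)$ to the Szeg\H o kernel. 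The way out is algebraic rather than kernel-theoretic: from $\Sopo=\Cine\Sopo$ one gets $[b,\Cine]\Sopo f=(I-\Cine)[b,\Sopo]f$ on the H\"older-dense class, hence boundedness (resp.\ compactness) of $[b,\Cine]:H^p(bD,\Op)\to L^p(bD,\Op)$, and then by duality of $[b,\Cine^*]$ on $L^{p'}(bD,\Op^{-1/(p-1)})$; the twin-ball/median-value testing is then performed on $\Cine^*$, whose kernel \emph{does} satisfy explicit size, smoothness and lower bounds (Theorem \ref{T:5.2}, using \eqref{gwz1} and \eqref{cr}). Without this reduction, the lower bound \eqref{Lp-est-2} and both halves of part (2) are not established by your argument.
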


Theorem \ref{cauchy4}  extends to the optimal setting (that is, to $D$ with minimal smoothness) seminal results of Coifman--Rochberg--Weiss \cite{CRW}
 and Krantz--S.Y.\,Li \cite{KL2}. We point out that

\begin{itemize}
\item[{ \tt(1.)}] The exponent $4$ in \eqref{Lp-est-1} can be sharpened to $3+\delta$ for any $\delta>0$ but it cannot be reduced to $3$ due to the minimal smoothness of the domain.
\item[{\tt (2.)}] In the two necessity arguments in the above result, we need to make extra assumptions on $[b, \Cine](I-\Sopo)$. This is because in our setting of
 minimal smoothness there is no kernel information for $\Sopo$ that would
 guarantee a ``non-degenerate condition'' that would then give that $b\in{\rm BMO}(bD,\sigma)$; our extra assumptions are needed to ensure the latter.
\end{itemize}

  As was the case for the study of $\Sopo$ in \cite{DLLW}, it turns out that extrapolation is an effective tool to study the commutator  $[b, \Sopo]$ even though there is no baseline $L^2$-regularity that is naturally satisfied by $[b, \Sopo]$ (in great contrast with the situation for $\Sopo$ alone).
  We anticipate that extrapolation is also effective for characterizing finer properties, such as the Schatten-$p$ norm of $[b, \Sopo]$, and plan to address these questions in future work; see Feldman--Rochberg \cite{FR} for a related result. 
\vskip0.1in

The notion of Cauchy-Szeg\H o projection may be extended to any reference measure in 
the Muckenhoupt class $A_2(bD)$ (namely for $p=2$) and we adopt the notation
$\SopO$; as customary in this theory, the operator $\SopO$ is naturally bounded on $L^2(bD, \O)$. We have the following result for the commutator $[b, \SopO]$:

  \begin{theorem}\label{cauchy2}
Let $D\subset \mathbb C^n$, $n\geq 2$, be  a bounded, strongly pseudoconvex domain of class $C^2$.
The following hold for any $b\in L^2(bD, \Ot )$:

\vskip0.1in
$(1)$ if $b\in{\rm BMO}(bD,\sigma)$ then the commutator $[b, \SopO]$ is bounded on $L^2(bD, \Ot)$ for any $A_2$-measure $\Ot$. Conversely, if $[b, \SopO]$ is bounded on $L^2(bD, \Ot)$ for some $A_2$-measure $\Ot$, then $b\in{\rm BMO}(bD,\sigma)$;
\vskip0.1in

$(2)$  if $b\in{\rm VMO}(bD,\sigma)$ then the commutator $[b, \SopO]$ is compact on  $L^2(bD, \Ot)$  for any $A_2$-measure $\Ot$. Conversely, if $[b, \SopO]$ is compact on $L^2(bD, \Ot)$ for some $A_2$-measure $\Ot$, then $b\in{\rm VMO}(bD,\sigma)$.
\vskip0.1in
\noindent All the implied constants depend solely on $D$ and $\Ot$.
\end{theorem}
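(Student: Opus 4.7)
The proof of Theorem \ref{cauchy2} follows the overall strategy of Theorems \ref{T:Szego-for-A_p} and \ref{cauchy4}, but with the factorization argument rerun directly inside $L^2(bD, \Ot)$ rather than inside $L^2(bD, \omega)$ for a Leray--Levi-like measure $\omega$. The baseline bound $\|\SopO\|_{L^2(bD, \Ot)\to L^2(bD, \Ot)} = 1$ is automatic since $\SopO$ is the orthogonal projection onto the closed subspace $H^2(bD, \Ot) \subset L^2(bD, \Ot)$ (Proposition \ref{P:Hardy-def-NT}).

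For the forward direction of $(1)$, BMO implies boundedness, I plan to first establish an $L^2(bD, \Ot)$-analog of the factorization \eqref{E:LS-2}:
\begin{equation*}
\SopO = \Bigl(\Cine + \SopO \circ \bigl((\Rine^s)^{\dagger_{\Ot}} - \Rine^s\bigr)\Bigr) \circ \Bigl[I - \bigl((\Cine^s)^{\dagger_{\Ot}} - \Cine^s\bigr)\Bigr]^{-1},
\end{equation*}
where $\dagger_{\Ot}$ denotes the adjoint in $L^2(bD, \Ot)$. The invertibility of the second factor for $\epsilon$ small rests on the cancellation $\|(\Cine^s)^{\dagger_{\Ot}} - \Cine^s\|_{L^2(bD, \Ot)\to L^2(bD, \Ot)} \lesssim \epsilon^{1/2}[\Ot]_{A_2}$, which follows from the $p = 2$ case of \eqref{E:newcancellation-2} after identifying the $L^2(bD, \Ot)$-adjoint with the $L^2(bD, \omega)$-adjoint modulo multiplication by the density $\pt$ of $\Ot$. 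Expanding the commutator via the Leibniz rule reduces $[b, \SopO]$ to commutators of $\Cine$, $\Cine^s$, $\Rine^s$ and their $\dagger_{\Ot}$-adjoints, whose boundedness on $L^2(bD, \Ot)$ follows from \cite{DLLWW} combined with quantitative extrapolation; the residual $[b, \SopO]$-term arising from differentiating the inverse factor is absorbed using the $\epsilon^{1/2}$ smallness.

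For the converse direction in $(1)$, I would proceed by testing $[b, \SopO]$ against normalized pairs of functions in $H^2(bD, \Ot)$ built from the reproducing kernel, in the spirit of Coifman--Rochberg--Weiss and \cite{KL2}: the mean oscillation of $b$ on any boundary ball $B \subset bD$ is captured by such a bilinear pairing, so that the commutator norm bounds the BMO norm of $b$. Since the converse is existential in $\Ot$, the choice $\Ot = \sigma$ (with $[\sigma]_{A_2} = 1$) is admissible and aligns the argument with the weak-factorization scheme already developed for $\Sopo$ in the proof of Theorem \ref{cauchy4}.

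Part $(2)$ proceeds by standard density and approximation arguments: for VMO implies compact, approximate $b \in {\rm VMO}(bD, \sigma)$ by continuous symbols $b_j$ for which $[b_j, \SopO]$ is manifestly compact on $L^2(bD, \Ot)$, and pass to the limit using the quantitative bound from $(1)$ together with $\|b - b_j\|_{{\rm BMO}(bD, \sigma)} \to 0$; for compact implies VMO, the contrapositive extracts from $b \notin {\rm VMO}(bD, \sigma)$ a sequence of modulated atomic functions at shrinking scales on which $[b, \SopO]$ admits no convergent subsequence. The main obstacle throughout is the absence of pointwise kernel asymptotics for $\SopO$ under only $C^2$-regularity of $bD$; the $L^2(bD, \Ot)$-factorization above together with the $A_2$-cancellation serves as the appropriate replacement.
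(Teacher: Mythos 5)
Your overall architecture (run the Kerzman--Stein comparison inside $L^2(bD,\Ot)$, exploit $\|\SopO\|=1$, reduce to commutators of the Cauchy--Leray integrals) is the right one, but several of your concrete steps do not go through, and you have missed the one observation that makes the theorem clean. You do not need the fine decomposition $\Cine=\Cine^s+\Rine^s$ or the cancellation \eqref{E:newcancellation-2} at all: since everything lives in the single Hilbert space $L^2(bD,\Ot)$, the operator $T_{\epsilon,\Omega_2}:=I-(\Cine^\spadesuit-\Cine)$ (with $\spadesuit$ the $L^2(bD,\Ot)$-adjoint) is the identity minus a skew-adjoint operator, hence invertible with $\|T_{\epsilon,\Omega_2}^{-1}\|\le 1$ for \emph{every} $\epsilon$. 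The paper's proof is simply $[b,\SopO]=\big([b,\Cine]+\SopO[b,T_{\epsilon,\Omega_2}]\big)\circ T_{\epsilon,\Omega_2}^{-1}$ with $[b,T_{\epsilon,\Omega_2}]=[b,\Cine]-[b,\Cine^\spadesuit]$, and Theorem \ref{T:5.2} does the rest. Your route has two genuine defects: transferring \eqref{E:newcancellation-2} from the $L^2(bD,\omega)$-adjoint to the $L^2(bD,\Ot)$-adjoint produces the error term $\pt^{-1}[(\Cine^s)^{*},\pt]$, and since $\pt$ is a general $A_2$ density (neither continuous nor bounded above and below) the localization argument of Proposition \ref{prop cancellation}, which relies on the continuity of the density $\varphi$, does not apply to it; and the ``absorption'' of the residual $[b,\SopO]\circ\big((\Rine^s)^{\spadesuit}-\Rine^s\big)$ term is both circular (it presupposes $\|[b,\SopO]\|<\infty$, the very thing being proved) and unavailable, because $\Rine^s$ is \emph{not} small in $\epsilon$ --- only the truncated essential part $(\Cine^s)^{\dagger}-\Cine^s$ carries the $\epsilon^{1/2}$ gain, while the norm of $\Rine^s$ may blow up as $\epsilon\to 0$.

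Your converse arguments are also not viable as stated. Testing $[b,\SopO]$ against reproducing kernels, or against modulated atoms at shrinking scales, requires pointwise size and lower bounds for the Cauchy--Szeg\H o kernel, which is precisely what is unavailable for $C^2$ boundaries; the paper instead uses the identity $[b,\Cine]\SopO f=(I-\Cine)[b,\SopO]f$ on $H^2(bD,\Ot)$ to deduce boundedness (resp.\ compactness) of $[b,\Cine]:H^2(bD,\Ot)\to L^2(bD,\Ot)$, dualizes, and then runs the median-value lower-bound argument on the \emph{explicit} kernel of $\Cine$ via Theorem \ref{T:5.2}. Moreover, you may not ``choose $\Ot=\sigma$'': the hypothesis is that $[b,\SopO]$ is bounded (or compact) for \emph{some given} $\Ot$, and the operator $\SopO$ itself changes with $\Ot$, so the existential quantifier sits in the hypothesis and is not at your disposal. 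Finally, for the VMO $\Rightarrow$ compactness direction, the compactness of $[b_j,\SopO]$ for continuous $b_j$ is not ``manifest'' without kernel estimates for $\SopO$; compactness must again be routed through $[b,\Cine]$ and $[b,\Cine^\spadesuit]$ via the factorization above.
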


 \vskip0.1in

A few remarks 
are in order. 
\vskip0.1in
$\bullet$ As discussed in \cite{DLLW}, in our setting of minimal regularity  the classical tools (pointwise estimates of the Cauchy--Szeg\H o kernel) are not available. Instead, one makes a comparison of  $\Sopo$ (which we recall is the orthogonal projection of $L^2(bD, \omega)$ onto the holomorphic Hardy space $H^2(bD, \Omega)$) with certain families of Cauchy-type integral operators  $\{\Cine\}_\epsilon$ which are bounded projections (albeit non-orthogonal) of $L^2(bD, \omega)$ onto
$H^2(bD, \omega)$ whose kernels are completely explicit. This comparison yields the identities
$$
\Sopo = \Cine\circ \bigg(I -\big(\Cine^*-\Cine\big)\bigg)^{-1} \quad \text{in}\quad L^2(bD, \omega),\qquad 0<\epsilon <\epsilon (D)
$$
which turn out to be suitable replacements for the (unavailable) pointwise estimates for the Cauchy-Szeg\H o kernel.
\vskip0.05in
$\bullet$ On the other hand, commutators are not projection operators, so the aforementioned comparison argument for $\Sopo$ and $\Cine$ does not immediately percolate to the commutators $[b, \Sopo]$ and $[b, \Cine]$.
Instead, the proof of Theorem \ref{cauchy2}
makes use of the following family of identites:
\begin{equation}\label{E:KS-comm-2}
 [ b, \SopO]  = \bigg([b ,  \EuScript C_\epsilon]\, +\, \EuScript S_{{\Omega_2}}
 \circ \big[b,  I-(\Cine^\dagger - \Cine) \big]
\bigg)\circ \bigg(I-(\Cine^\dagger - \Cine)\bigg)^{\!\!-1} \ \text{in}\ \ L^2(bD, \Ot)
\end{equation}
for any $A_2$-like measure $\Ot$ and for any $0<\epsilon< \epsilon (D)$ as above.
For the commutator of $[ b, \Sopo ]$ we obtain the more precise descriptions
\begin{equation}\label{E:KS-comm-p}
[ b, \Sopo ]  = \bigg(\![b , \EuScript C_\epsilon ] + \Sopo\circ  \big[b, I-(\Cine^\dagger - \Cine)\big]  - [b, \Sopo]\circ\big((\EuScript R^s_\epsilon)^\dagger - \EuScript R^s_\epsilon\big)\!\bigg) \circ  
\bigg(\!I-\big((\Cine^s)^\dagger-\Cine^s\big)\!\bigg)^{-1}
\end{equation}
in $L^2(bD, \omega)$, for any Leray Levi-like measure $\omega$ and for any $0<\epsilon< \epsilon (D)$, which lead to the explicit bounds in the conclusion of Theorem \ref{cauchy4}.
Identities \eqref{E:KS-comm-2} and \eqref{E:KS-comm-p} are proved in Section
 \ref{S:4}.

\vskip0.1in

$\bullet$ In the statement of 
 Theorem \ref{cauchy2} 
 we assume that the symbol
$b$ is in $L^2(bD, \Ot)$ rather than the larger class $L^1(bD, \Ot)$, because 
the former is the natural (i.e. maximal) function space where the Cauchy--Szeg\H o projection $\SopO$
is defined. The requirement that $b$ is in $L^2(bD, \Ot)$ 
is not restrictive
  because $D$ is bounded and of class $C^2$, and $A_p$-measures for such domains are absolutely continuous with respect to the Leray Levi-like measures, hence $\Op(bD)<\infty $ for any such measure for any $1<p<\infty$. It follows that ${\rm BMO}(bD,\sigma)\subset L^2(bD, \Ot)$ {for any $\Ot\in A_2(bD)$}, see \eqref{E:incl}.
\vskip0.1in

$\bullet$ The space {${\rm BMOA}(bD,\sigma)$  (resp. ${\rm VMOA}(bD,\sigma)$)}
is the proper subspace of ${\rm BMO}(bD,\sigma)$ (resp. ${\rm VMO}(bD,\sigma)$) obtained by changing the a-priori condition that $b\in L^1(bD, \sigma)$ with the stricter requirement that $b$ is in the holomorphic Hardy space {$H^1(bD, \sigma)$, see  \cite{Po}; by the above argument{\footnote{Indeed, if $f\in {\rm BMOA}(bD,\sigma)$ then $f\in L^2(bD, \sigma)$ by the above argument. Hence $f\in H^1(bD, \sigma)\cap L^2(bD, \sigma)$ and this implies that $f\in H^2(bD, \sigma)$, see \cite[Corollary 2]{LS2016}.}}, 
${\rm BMOA}(bD,\sigma)\subset H^2(bD, \Ot)$ for any $\Ot\in A_2(bD)$}. Changing the a-priori condition that $b\in L^2(bD, \Ot)$ to $b\in H^2(bD, \Ot)$ in Theorem 
 \ref{cauchy2} produces new statements that are true for $b\in {\rm BMOA}(bD, \sigma)$ (resp. $b\in {\rm VMOA}(bD, \sigma)$), with the same proof.
\vskip0.1in

$\bullet$ The $L^p(bD, \Omega_p)$-regularity and -compactness problems for $[b, \SopO]$, while meaningful, are, at present, unanswered for $p\neq 2$.

\subsection{Further results} It is clear from
\eqref{E:KS-comm-2} and \eqref{E:KS-comm-p}  that 
one also needs to prove
quantitative results for the Cauchy Leray integrals $\{\EuScript C_\epsilon\}_\epsilon$ that extend the scope of the earlier works \cite{LS2017} and \cite{DLLWW} from Leray Levi-like measures, to $A_p$-measures: these are \cite[Theorem
3.1;  Proposition 3.2]{DLLW} along with Theorem \ref{T:5.2}
in Section \ref{S:3} below.

\subsection{Organization of this paper.} In the next section we recall the necessary background.
  All the quantitative results pertaining to the Cauchy--Leray integral are collected in 
 in Section \ref{S:3}. 
 Theorem \ref{cauchy4}, and Theorem \ref{cauchy2}, are proved in 
 Section \ref{S:4}.

\vskip0.1in

\section{Background}
 \label{S:2}
\setcounter{equation}{0}

In this section we introduce notations and recall earlier results \cite{DLLWW, LS2017}. For the reader's convenience, 
we also reproduce a few basic facts
 from  the companion paper \cite{DLLW} that will be used throughout this paper.
 We will henceforth assume that $D\subset\mathbb C^n$ is 
a bounded, strongly pseudoconvex domain of class $C^2$; that is, there is
$\rho\in C^2(\mathbb C^n, \mathbb R)$ which is strictly plurisubharmonic and such that
$D=\{z\in\mathbb C^n: \rho(z)<0\}$ and 
$bD=\{w\in\mathbb C^n: \rho(w)=0\}$ with $\nabla \rho(w)\not=0$ for all $w\in bD$.
 (We refer to such $\rho$ as a {\em defining function for $D$}; see e.g., \cite{Ra} for the basic properties of defining functions. Here we assume that one such $\rho$ has been fixed once and for all.) We will throughout make use of the following abbreviated notations:
 $$
\|T\|_p\ \equiv\ \|T\|_{L^p(bD, d\mu)\to L^p(bD, d\mu)},\quad\mbox{and}\quad  \|T\|_{p, q} \ \equiv\ \|T\|_{L^p(bD, d\mu)\to L^q(bD, d\mu)} 
 $$
 where the operator $T$ and the measure $\mu$ will be clear from context.
\vskip0.1in

\noindent $\bullet$ {\em The Levi polynomial and its variants.}\quad Define
$$ \mathcal L_0(w, z) := \langle \partial\rho(w),w-z\rangle -{1\over2} \sum_{j,k} {\partial^2\rho(w) \over \partial w_j \partial w_k} (w_j-z_j)(w_k-z_k),  $$
where $\partial \rho(w)=({\partial\rho\over\partial w_1}(w),\ldots, {\partial\rho\over\partial w_n}(w))$
and we have used the notation $\langle\eta,\zeta\rangle=\sum_{j=1}^n\eta_j\zeta_j$ for $\eta=(\eta_1,\ldots, \eta_n), \zeta=(\zeta_1,\dots,\zeta_n)\in\mathbb C^n$.
The strict plurisubharmonicity of $\rho $ implies that
$$  2\operatorname{ Re} \mathcal L_0(w, z) \geq -\rho(z)+c |w-z|^2,  $$
for some $c>0$, whenever $w\in bD$ and $z\in \bar D$ is sufficiently close to $w$.
We next define
\begin{align}\label{g0}
    g_0(w,z) := \chi \mathcal L_0+ (1-\chi) |w-z|^2
\end{align}
where $\chi=\chi(w,z)$ is a $C^\infty$-smooth cutoff function with $\chi=1$ when $|w-z|\leq \mu/2$ and $\chi=0$ if $|w-z|\geq \mu$.
Then for $\mu$ chosen sufficiently small (and then kept fixed throughout), we have that
\begin{equation}\label{E:c}
 \operatorname{ Re}g_0(w,z)\geq c(-\rho(z)+ |w-z|^2)
 \end{equation}
for $z$ in $\bar D$ and $w$ in $bD$, with $c$  a positive constant; we will refer to $g_0(w, z)$ as {\em the modified Levi polynomial}.
Note that $g_0(w, z)$ is polynomial in the variable $z$,  whereas in the variable $w$ it has no smoothness beyond mere continuity. To amend for this lack of regularity, 
 for each $\epsilon>0$ one considers a variant $g_\epsilon$ defined as follows.  Let $\{\tau_{jk}^\epsilon(w)\}$ be an $n\times n$-matrix  of $C^1$ functions such that
$$\sup_{w\in bD}\Big|{\partial^2\rho(w)\over\partial w_j\partial w_k}- \tau_{jk}^\epsilon(w)\Big|\leq\epsilon,\quad 1\leq j,k\leq n.$$
{  Set
\begin{align}\label{cepsilon}
c_\epsilon:=\sup_{w\in bD, 1\leq j,k\leq n} \big|\nabla \tau_{jk}^\epsilon(w)\big|.
\end{align}
For the convenience of our statement and proof, we may choose those $\{\tau_{jk}^\epsilon(w)\}$ such that 
\begin{align}\label{cepsilon bound}
c_\epsilon\lesssim \epsilon^{-1}.
\end{align}
where the implicit constant is independent of $\epsilon$.}
We also set
$$ \mathcal L_\epsilon(w, z) = \langle \partial\rho(w),w-z\rangle -{1\over2} \sum_{j,k}\tau_{jk}^\epsilon(w) (w_j-z_j)(w_k-z_k),  $$
and define
$$    g_\epsilon(w,z) = \chi \mathcal L_\epsilon+ (1-\chi) |w-z|^2, \quad z,w\in\mathbb C^n.  $$
Now $g_\epsilon$ is of class $C^1$ in the variable $w$, and
$$\left|g_0(w,z)-g_\epsilon(w,z) \right|\lesssim \epsilon |w-z|^2,\quad w\in bD, z\in \overline{D}.$$
We assume that $\epsilon$ is sufficiently small (relative to the constant $c$ in \eqref{E:c}), and this gives that
\begin{equation}\label{E:epsilon-0}
\left|g_0(w,z) \right|\leq  \left|g_\epsilon(w,z)\right|\leq \tilde C\left|g_0(w,z) \right|,\quad w, z\in bD
\end{equation}
where the constants $C$ and $\tilde C$ are independent of $\epsilon$; see \cite[Section 2.1]{LS2017}.

\vskip0.1in

\noindent $\bullet$ {\em The Leray--Levi measure for $bD$.}\quad 
We let $\sigma$ denote induced Lebesgue measure for $bD$ and we henceforth refer to the family
\begin{equation*}
\{\Lambda\sigma\}_\Lambda \equiv \left\{\omega:= \Lambda\, \sigma,\ \Lambda\in C(bD), \  0<c(D, \Lambda)\leq\Lambda(w)\leq C(D, \Lambda)<\infty\quad \text{for any}\ w\in bD\right\}
\end{equation*}
as the {\em Leray Levi-like measures}. This is because the Leray Levi measure $\lambda$, which plays a distinguished role in the analysis
 \cite{LS2017} of the Cauchy--Leray integrals 
$\{\Cine\}_\epsilon$ and their truncations $\{\Cine^s\}_\epsilon$, is a member of this family on account of the identity

 \begin{equation}\label{E:Leray Levi to sigma}
 d\lambda(w) =\Lambda(w)d\sigma(w),\quad w\in bD,
 \end{equation}
 
where $\Lambda \in C(\overline{D})$ satisfies the required bounds
$ 0< \epsilon(D) \leq \Lambda(w)\leq C(D)<\infty$ for any $ w\in bD$
as a consequence of the strong pseudoconvexity and $C^2$-regularity and boundedness of $D$. 
Hence we may equivalently express any Leray Levi-like measure $\omega$ as
\begin{equation}\label{E:LL-weights}
\omega = \varphi \lambda
\end{equation}
for some $\varphi \in C(\overline{bD})$ such that
$ 0< m(D) \leq \varphi(w)\leq M(D)<\infty$ for any $ w\in bD$.

Recall that (any) Leray-Levi measure $\lambda$ has density

 \begin{equation}\label{E:Leray Levi to sigma}
 d\lambda(w) =\Lambda(w)d\sigma(w),\quad w\in bD,
 \end{equation}

Then  the linear functional
\begin{align}\label{lambda}  
f\mapsto {1\over (2\pi i)^n} \int\limits_{bD} f(w) j^*(\partial \rho \wedge (\bar\partial \partial \rho)^{n-1})(w)=: \int\limits_{bD} f(w)d\lambda(w) 
\end{align}
where $f\in C(bD)$, defines  a measure $\lambda$ with positive density given by
$$   d\lambda(w) = {1\over (2\pi i)^n} j^*(\partial\rho \wedge (\bar\partial \partial\rho)^{n-1}) (w)  $$
where $j^*$ denotes the pullback under the inclusion $$j:bD\hookrightarrow\mathbb C^n.$$ 

We point out that the definition of $\lambda$ depends upon the choice of defining function for $D$, which here has been fixed once and for all; hence we refer to $\lambda$ as ``the'' {\em Leray--Levi measure}.
\vskip0.1in

\noindent $\bullet$ {\em A space of homogeneous type.}\quad Consider the function 
\begin{equation}\label{E:quasi-dist}
{\tt d}(w,z) := |g_0(w,z)|^{1\over2},\quad w, z\in bD.
\end{equation} 
It is known \cite[(2.14)]{LS2017} that
$$
|w-z|\lesssim {\tt d}(w, z)\lesssim |w-z|^{1/2},\quad w, z\in bD
$$
and from this it follows that the space of H\"older-type functions \cite[(3.5)]{LS2017}:
\begin{equation}\label{E:Holder}
|f(w) - f(z)|\lesssim {\tt d}(w, z)^\alpha\quad \mbox{for some}\ 0<\alpha\leq 1\ \ \mbox{and for all}\ w, z\in bD
\end{equation}
is dense in $L^p(bD, \omega)$, $1<p<\infty$ for any Leray Levi-like measure see \cite[Theorem 7]{LS2017}.

\vskip0.1in

It follows from \eqref{E:epsilon-0} that
\begin{align}\label{gd}
{\color{black} \tilde C{\tt d}(w,z)^{2}\leq |g_\epsilon(w,z)|\leq C {\tt d}(w,z)^{2}, \quad w, z\in bD}
 \end{align}
for any $\epsilon$ sufficiently small. It is shown in \cite[Proposition 3]{LS2017} that  ${\tt d}(w, z)$ is a quasi-distance: there exist constants $A_0>0$ and  $C_{\tt d}>1$ such that for all $w,z,z'\in bD$,
\begin{align}\label{metric d}
\left\{
                \begin{array}{ll}
                  1)\ \ {\tt d}(w,z)=0\quad {\rm iff}\quad w=z;\\[5pt]
                  2)\ \ A_0^{-1} {\tt d}(z,w)\leq  {\tt d}(w,z) \leq A_0 {\tt d}(z,w);\\[5pt]
                  3)\ \ {\tt d}(w,z)\leq C_{\tt d}\big( {\tt d}(w,z') +{\tt d}(z',z)\big).
                \end{array}
              \right.
\end{align}

\smallskip

Letting  $ B_r(w) $ denote the  boundary balls  determined via the quasi-distance ${\tt d }$,
\begin{align}\label{ball} 
B_r(w) :=\{ z\in bD:\ {\tt d}(z,w)<r \}, \quad {\rm where\ } w\in bD,
\end{align} 
we have that
\begin{align}\label{E:omegab}
c_\omega^{-1} r^{2n}\leq \omega\big(B_r(w) \big)\leq c_\omega r^{2n},\quad 0<r\leq 1,
\end{align}
for some $c_\omega>1$, see \cite[p. 139]{LS2017}.
It follows that the triples $\{bD, {\tt d}, \omega\}$, for any Leray Levi-like measure $\omega$,  are spaces of homogeneous type, where the measures $\omega$ have the doubling property:
{
\begin{lemma}\label{measure lambda}
The Leray Levi-like measures $\omega$ on $bD$ are doubling, i.e., there is a positive constant $C_\omega$ such that for all $x\in bD$ and $0<r\leq1$,
$$ 0<\omega(B_{2r}(w))\leq C_\omega\omega(B_{r}(w))<\infty. $$
Furthermore, there exist constants
$\epsilon_\omega\in(0,1)$ and $C_\omega>0$ such that 
$$ \omega( B_r(w)\backslash B_r(z) ) +  \omega( B_r(z)\backslash B_r(w) ) \leq C_\omega \left( { {\tt d}(w,z) \over r}  \right)^{\!\!\epsilon_\omega}   $$
for all $w,z\in bD$ such that  ${\tt d}(w,z)\leq r\leq1$.
\end{lemma}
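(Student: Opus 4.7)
The doubling part is immediate: by the Ahlfors two-sided estimate \eqref{E:omegab}, for $w\in bD$ and $0<r\le 1$ one has
\begin{equation*}
\omega(B_{2r}(w))\,\le\, c_\omega(2r)^{2n}\,=\,2^{2n}c_\omega\, r^{2n}\,\le\, 2^{2n}c_\omega^2\,\omega(B_r(w)),
\end{equation*}
and both positivity and finiteness of $\omega(B_r(w))$ follow from the same two-sided bound. The rest of the plan is devoted to the oscillation estimate.

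For that, the plan is first to reduce to the induced Lebesgue measure $\sigma$, and then to exploit the local geometric structure of the quasi-balls. By \eqref{E:Leray Levi to sigma}--\eqref{E:LL-weights}, any Leray Levi-like $\omega$ factors as $\omega=\varphi\,\lambda=(\varphi\Lambda)\,\sigma$, with both densities continuous on $\overline{bD}$ and pinched between positive constants, so $\omega(E)\asymp\sigma(E)$ for every Borel $E\subset bD$; it therefore suffices to prove the desired inequality for $\sigma$.

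The key geometric input --- the same one that underlies \eqref{E:omegab} --- is that in adapted local coordinates at $w\in bD$ the quasi-ball $B_r(w)=\{x\in bD:|g_0(x,w)|<r^2\}$ is comparable to a Kor\'anyi-type box of tangential dimension $r$ in the $2n-2$ complex-tangent directions and normal dimension $r^2$ in the totally real direction. Using the $C^1$-regularity in $w$ of $g_\epsilon$ together with the equivalence $g_0\asymp g_\epsilon$ from \eqref{E:epsilon-0}, I would show that the boxes centered at $w$ and at $z$ are related by a rigid motion of tangential shift $\lesssim {\tt d}(w,z)$ and normal shift $\lesssim {\tt d}(w,z)^2$. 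A first-order volume computation then yields
\begin{equation*}
\sigma\bigl(B_r(w)\setminus B_r(z)\bigr)+\sigma\bigl(B_r(z)\setminus B_r(w)\bigr)\,\lesssim\, r^{2n-2}\,{\tt d}(w,z)\,=\,r^{2n-1}\cdot\bigl({\tt d}(w,z)/r\bigr),
\end{equation*}
and since both $r^{2n-1}$ and ${\tt d}(w,z)/r$ lie in $[0,1]$ this is bounded by $\bigl({\tt d}(w,z)/r\bigr)^{\epsilon_\omega}$ for any $\epsilon_\omega\in(0,1)$, giving the claim.

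The main obstacle will be making the Kor\'anyi-box comparison rigorous under the minimal hypothesis that $\rho$ is only $C^2$: the second derivatives $\partial^2\rho$ are merely continuous, so $g_0$ itself fails to be differentiable in $w$, and one must carefully argue through $g_\epsilon$ and the equivalence \eqref{E:epsilon-0} when tracking how the box deforms as its center varies. A purely abstract fallback, avoiding explicit geometry, would be to invoke the Mac\'ias--Segovia theorem to replace ${\tt d}$ by an equivalent quasi-metric whose suitable power is a genuine metric, and then to combine the true triangle inequality there with the Ahlfors regularity \eqref{E:omegab} to obtain the oscillation estimate, possibly with a smaller exponent $\epsilon_\omega\in (0,1)$.
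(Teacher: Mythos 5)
Your treatment of the doubling inequality is correct and coincides with the paper's (whose entire proof is the observation that the lemma follows from \eqref{E:omegab}): applying the two-sided bound $c_\omega^{-1}r^{2n}\le\omega(B_r(w))\le c_\omega r^{2n}$ at radii $r$ and $2r$ gives positivity, finiteness and the doubling constant $2^{2n}c_\omega^2$. The reduction of the oscillation estimate from $\omega$ to $\sigma$ via \eqref{E:Leray Levi to sigma}--\eqref{E:LL-weights} is also fine.

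For the oscillation estimate itself there is a genuine gap, and your fallback does not close it. The abstract route fails because Ahlfors regularity together with a (quasi-)triangle inequality --- even after a Mac\'ias--Segovia regularization of ${\tt d}$ --- only places $B_r(w)\triangle B_r(z)$ inside an annulus $\{r-\eta\le {\tt d}(\cdot,w)<r+\eta\}$, and \eqref{E:omegab} bounds the measure of that annulus merely by $c_\omega(r+\eta)^{2n}-c_\omega^{-1}(r-\eta)^{2n}$, which does \emph{not} tend to $0$ as $\eta\to 0$ unless $c_\omega=1$. What is really needed is an annular-decay property, $\omega(\{r-\eta\le{\tt d}(\cdot,w)<r+\eta\})\lesssim \eta\, r^{2n-1}$ for $\eta\le r$, and this does not follow from \eqref{E:omegab} alone; it is precisely the extra geometric input your first route is groping for. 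That first route (the Kor\'anyi-box description of $B_r(w)$) is the right source for it, but as written it is a plan rather than a proof: you flag the $C^2$-only regularity as ``the main obstacle'' and leave it unresolved, and your model of the two boxes as differing by ``a rigid motion of tangential shift ${\tt d}(w,z)$ and normal shift ${\tt d}(w,z)^2$'' omits the tilting of the complex tangent space between $w$ and $z$, a rotation of angle $O(|w-z|)$ which perturbs the thin ($r^2$-wide) face of the box by $O(|w-z|\,r)$ and contributes to the symmetric difference at the same order as the translations; it must be estimated, not suppressed.

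A clean way to finish within the toolkit already cited: the symmetric difference is contained in thin ${\tt d}$-annuli about the two spheres $\{{\tt d}(\cdot,w)=r\}$ and $\{{\tt d}(\cdot,z)=r\}$ of width $\eta\simeq({\tt d}(w,z)\,r)^{1/2}$, by the H\"older-type smoothness of $x\mapsto{\tt d}(x,\cdot)$ established alongside \eqref{metric d} in \cite[Proposition 3]{LS2017}; the box computation underlying \eqref{E:omegab} then gives each annulus measure $\lesssim\eta\,r^{2n-1}=({\tt d}(w,z)/r)^{1/2}r^{2n}\le({\tt d}(w,z)/r)^{1/2}$, i.e., the lemma with $\epsilon_\omega=1/2$. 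Either this, or a completed version of your box-deformation argument, is required; note that the paper's own one-line justification glosses over exactly this point.
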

\begin{proof}
The proof is an immediate consequence of \eqref{E:omegab}.
\end{proof}
}

\vskip0.1in

\noindent $\bullet$\quad {\em A family of Cauchy-like integrals.}\quad 
 In \cite[Sections 3 and 4]{LS2017} an ad-hoc family $\{\mathbf C_\epsilon\}_\epsilon$ of Cauchy-Fantappi\`e integrals is introduced (each determined by the aforementioned denominators $g_\epsilon(w, z)$) whose corresponding boundary operators $\{\EuScript C_\epsilon\}_\epsilon$ play a crucial role in the analysis
 of $L^p(bD, \lambda)$-regularity of the Cauchy--Szeg\H o projection. 
 We henceforth refer to 
 $\{\Cine\}_\epsilon$ as the {\em Cauchy-Leray integrals}; we record here a few relevant points for later reference.
\vskip0.1in
\begin{itemize}
 \item[{\tt [i.]}] Each $\EuScript C_\epsilon$ 
 admits a primary decomposition in terms of an ``essential part'' $\EuScript C_\epsilon^\sharp$ and a ``remainder'' $\EuScript R_\epsilon$, which are used in the proof of the $L^2(bD, \omega)$-regularity of $\EuScript C_\epsilon$. However, at this stage the magnitude of the parameter $\epsilon$ plays no role
 (this is because of the ``uniform'' estimates \eqref{gd}) and we temporarily drop reference to $\epsilon$ and simply write  $\EuScript C$ in lieu of $\EuScript C_\epsilon$; $C(w, z)$ for $C_\epsilon(w, z)$, etc.. Thus
\begin{align}\label{C operator}
   \EuScript C =  \EuScript C^\sharp+ \EuScript R,  
\end{align}
with a corresponding decomposition for the integration kernels:
\begin{equation}\label{E: C kernel}
C(w,z)=C^\sharp(w,z)+ R(w,z).
\end{equation}
\vskip0.07in

\item[]
The ``essential'' kernel $C^\sharp(w,z)$  satisfies standard size and smoothness conditions that ensure
the boundedness of $\EuScript C^\sharp$ in $L^2(bD, \omega)$ by a $T(1)$-theorem for the space of homogenous type $\{bD, {\tt d}, \omega\}$. On the other hand, the ``remainder'' kernel $R(w,z)$ satisfies improved size and smoothness conditions granting
 that the corresponding operator $\EuScript R$ is bounded in $L^2(bD, \omega)$ by elementary considerations; see \cite[Section 4]{LS2017}.
\vskip0.1in

\item[{\tt [ii.]}] 
One then turns to the Cauchy--Szeg\H o projection, for which $L^2(bD, \omega)$-regularity is trivial but $L^p(bD, \omega)$-regularity, for $p\neq 2$, is not. 
It is in this stage that the size of $\epsilon$ in the definition of the Cauchy-type boundary operators of item  {\tt [i.]} is relevant.
It turns out that each $\EuScript C_\epsilon$ admits a further, ``finer'' decomposition into (another) ``essential'' part and (another) ``reminder'', which are obtained by truncating the integration kernel $C_\epsilon (w, z)$ by a smooth cutoff function $\chi_\epsilon^s(w, z)$ that equals 1 when ${\tt d}(w, z) <s = s(\epsilon)$. One has:
\begin{equation}\label{E:Cs-op}
\EuScript C_\epsilon = \EuScript C^s_\epsilon + \EuScript R^s_\epsilon
\end{equation}
where
\begin{equation}\label{E:Ess-small}
 \|  (\EuScript C^s_\epsilon)^\dagger -  \EuScript C^s_\epsilon\|_{p} \lesssim \epsilon^{1/2} M_p
 \end{equation}
for any $1<p<\infty$, where $\displaystyle{M_p = {p\over p-1} +p}$. Here and henceforth, the upper-script ``$\dagger$'' denotes adjoint in $L^2(bD, \omega)$ (hence $(\EuScript C^s_\epsilon)^\dagger$ is the adjoint of $\EuScript C^s_\epsilon$ in $L^2(bD, \omega)$); see \cite[Proposition 18]{LS2017}.
Furthermore $\EuScript R^s_\epsilon$ and $(\EuScript R^s_\epsilon)^\dagger$ are controlled by ${\tt d}(w, z)^{-2n+1}$ and therefore are easily seen to be bounded
\begin{equation}\label{E:RsBdd}
\EuScript R^s_\epsilon,\ \ (\EuScript R^s_\epsilon)^\dagger:
L^1(bD, \omega)\to L^\infty(bD, \omega),
\end{equation}
 see \cite[(5.2) and comments thereafter]{LS2017}. 

\end{itemize}

\vskip0.1in

\noindent$\bullet$ {\em  Bounded mean oscillation on $bD$}. \quad
The space ${\rm BMO }(bD, \lambda)$ is defined as the collection of all $b\in L^1(bD, \lambda)$ such that
$$ \|b\|_*:=\sup_{ z\in bD, r>0, B_r(z)\subset bD} {1\over \lambda(B_r(z))}\int\limits_{B_r(z)} |b(w)-b_B|d\lambda(w)<\infty, $$
{with the balls $B_r(z)$ as in \eqref{ball} and where}
\begin{align}\label{fb}
b_B={1\over \lambda(B)}\int\limits_B b(z)d\lambda(z).
\end{align}
${\rm BMO }(bD, \lambda)$ is a normed space with
$\|b\|_{{\rm BMO }(bD, \lambda)}:=\|b\|_*+ \|b\|_{L^1(bD, \lambda)}. $
{We note the inclusion
\begin{equation}\label{E:BMO-Lp}
{\rm BMO }(bD, \lambda)\subset L^p(bD,  \lambda),\quad 1\leq p<\infty,
\end{equation}
which is a consequence of the John--Nirenberg inequality \cite[Corollary p. 144]{STE1} and of the compactness of $bD$.}
On account of \eqref{E:Leray Levi to sigma}, it is clear that 
$$
{\rm BMO }(bD, \sigma) = {\rm BMO }(bD, \lambda)\quad \text{with}\quad \|b\|_{{\rm BMO }(bD, \sigma)}\approx \|b\|_{{\rm BMO }(bD, \lambda)},
$$
where ${\rm BMO }(bD, \sigma)$ is the classical $BMO$ space (where the reference measure is induced Lebesgue).
\vskip0.1in

\noindent$\bullet$ {\em  Vanishing mean oscillation on $bD$}. \quad The space ${\rm VMO }(bD, \lambda)$ is the subspace of ${\rm BMO }(bD, \lambda)$ whose members satisfy the further requirement that
\begin{align}\label{vmpc}
\lim\limits_{a\rightarrow 0}\sup\limits_{B\subset bD:~r_B=a}{1\over \lambda(B)}\int\limits_{B}|f(z)-f_B|d\lambda(z)=0,
\end{align}
where $r_B$ is the radius of $B$. As before, it is clear that ${\rm VMO }(bD, \sigma) = {\rm VMO }(bD, \lambda)$.
\vskip0.1in

\noindent $\bullet$ {\em Muckenhoupt weights on $bD$.} \quad Let {$p\in(1, \infty)$. A non-negative locally integrable function $\psi$ is called an
\emph{$A_p(bD, \sigma)$-weight}, if
\begin{align*}
[\psi]_{A_p(bD, \sigma)}:=\sup_{B} \langle \psi\rangle_B\langle \psi^{1-p'}\rangle_B^{p-1}<\infty,
\end{align*}
where the supremum is taken over all balls $B$ in $bD$, and 
$ \displaystyle{\langle \phi\rangle_B:={1\over \sigma(B)}\int\limits_{B}\phi (z)\,d\sigma(z)}$. Moreover,  $\psi$ is called an
\emph{$A_1(bD, \sigma)$-weight} if $[\psi]_{A_1(bD, \sigma)}:=\inf\{C\geq0: \langle \psi\rangle_B \leq C \psi(x), \forall x\in B, \forall {\rm\ balls\ } B\in bD \} <\infty$.

\vskip0.1in
Similarly, one can define the \emph{$A_p(bD, \lambda)$-weight} for $1\leq p<\infty$.

As before, the identity \eqref{E:Leray Levi to sigma} grants that $$A_p(bD, \sigma) = A_p(bD, \lambda)\quad \text{with}\quad
[\psi]_{A_p(bD, \sigma)}\approx [\psi]_{A_p(bD, \lambda)},$$ thus we will henceforth simply write
$A_p(bD)$ and $[\psi]_{A_p(bD)}$. At times it will be more convenient to work with $A_p(bD, \lambda)$, and in this case we will refer to its members as
{\em $A_p$-like weights}.
\vskip0.1in

\noindent$\bullet$ {\em  Holomorphic Hardy spaces for Muckenhoupt weights.} Given a function $F$ holomorphic in $D$ we let $\mathcal N(F)$  denote the non-tangential maximal function of $F$, 
that is
$$ \mathcal N(F)(\xi) := \sup_{z\in \Gamma_\alpha(\xi)}|F(z)|, \quad \xi\in bD,$$
{where}
 $ \Gamma_\alpha(\xi) =\{z\in D:\ |(z-\xi)\cdot \bar{\nu}_\xi| < (1+\alpha) \delta_\xi(z), |z-\xi|^2<\alpha \delta_\xi(z)\}$, with
$\bar{\nu}_\xi$ = the (complex conjugate of) the outer unit normal vector to $\xi\in bD$, and $\delta_\xi(z)=$ the minimum between the {$($Euclidean$)$} distance of $z$ to $bD$ and the distance of $z$ to the tangent 
space  at $\xi$.
\vskip0.08in

In \cite[Proposition 1.3]{DLLW}  we have proved that the following spaces of holomorphic functions:
\begin{definition}\label{D:Hardy space}
Suppose $1\leq\, p<\infty$ and let $\Op$ be an $A_p$-measure. We define $H^p(bD, \Op)$ to be the space of functions $F$ that are holomorphic in $D$ {with}
$\mathcal N (F)\in L^p(bD, \Op)$, and set
\begin{equation}\label{E:NT-norm}
\|F\|_{H^p(bD, \Op)} := \|\mathcal N (F)\|_{L^p(bD, \Op)}
\end{equation}
\end{definition}
are closed subspaces of  $L^p(bD, \Op)$. Hence, for $p=2$ there is a (unique) orthogonal projection
$\SopO: L^2(bD, \Ot)\to H^2(bD, \Ot)$.

\section{The commutator of the Cauchy--Leray integral
}\label{S:3}
\setcounter{equation}{0}

As before, in the proofs of all statements in this section we adopt the shorthand $\Omega$ for $\Omega_p$, and $\psi$ for $\psi_p$. We begin by recalling two results from
\cite{DLLW}.
\begin{theorem}\cite{DLLW}\label{T:5.1} Let $D\subset \mathbb C^n$, $n\geq 2$, be  a bounded, strongly pseudoconvex domain of class $C^2$. Then the Cauchy-type integral $\EuScript C_\epsilon$ is bounded on $L^p(bD, \Op)$ for any $0<\epsilon<\epsilon(D)$, any $1<p<\infty$ and any $A_p$-measure $\Op$, 
  {with}
 \begin{align}\label{C quantitative bound}
 \|\EuScript C_\epsilon\|_{L^p(bD,\Op)\to L^p(bD,\Op)}\  \lesssim
 \, { c_\epsilon}\cdot [\Op]_{A_p}^{\max\{1,{1\over p-1}\}},
  \end{align}
where the implied constant depends on $p$ and $D$,  but is independent of {  $\epsilon$ or $\Op$}, { and $c_\epsilon$ is the constant in \eqref{cepsilon}}.
\end{theorem}
It follows that for any $A_2$-measure $\Ot$, the $L^2(bD, \Ot)$-adjoint
$\EuScript C_\epsilon^\spadesuit$ 
is also bounded on $L^p(bD, \Op)$ with same bound.

\begin{proposition}\cite{DLLW}\label{prop cancellation}
For any fixed $0<\epsilon<\epsilon (D)$ as in \cite{LS2017}, there exists $s=s(\epsilon)>0$ such that  
   \begin{equation}\label{E:s-to-dag-s}
  \|(\EuScript C_\epsilon^s)^\dagger-\EuScript C_\epsilon^s\|_{L^p(bD, \Op)\to L^p(bD, \Op)}\
  {\lesssim}\ \epsilon^{1/2} [\Op]_{A_p}^{\max\{1,{1\over p-1}\}}
  \end{equation}
 \noindent  $\text{for any}\ 1<p<\infty \  \text{and for any} \ A_p \text{-measure},  \Op$ where the implied constant depends on $D$ and $p$ but is independent of $\Op$ and of $\epsilon$.
  As before, here  $(\EuScript C_\epsilon^s)^\dagger$ 
   denotes the adjoint
  in $L^2(bD, \omega)$.
  \end{proposition}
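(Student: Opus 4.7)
My plan is to adapt the blueprint of the proof of Theorem \ref{T:5.1} to the operator $T_\epsilon^s := (\EuScript C_\epsilon^s)^\dagger - \EuScript C_\epsilon^s$, while carefully tracking the $\epsilon^{1/2}$ gain furnished by the cancellation in \eqref{E:Ess-small}. Specifically, I would split
\begin{equation*}
T_\epsilon^s \;=\; T_\epsilon^{s,\sharp}\, +\, T_\epsilon^{s,R},
\end{equation*}
where $T_\epsilon^{s,\sharp}$ is an ``essential part'' whose integration kernel satisfies standard Calder\'on--Zygmund size and smoothness estimates of the form \eqref{gwz}, and $T_\epsilon^{s,R}$ is a ``remainder'' whose kernel is controlled, along with its first-order differences in one of the two variables, by an integrable singularity of the type \eqref{cr}. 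Such a splitting can be obtained by combining the primary decomposition \eqref{C operator} (applied separately to $\EuScript C_\epsilon^s$ and to $(\EuScript C_\epsilon^s)^\dagger$) with the truncation that produces $\EuScript C_\epsilon^s$ from $\EuScript C_\epsilon$ as in \eqref{E:Cs-op}; the key novelty is to show that after pairing the two copies antisymmetrically all pointwise kernel bounds, both size and smoothness, carry an explicit factor of $\epsilon^{1/2}$.

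For $T_\epsilon^{s,\sharp}$, once the Calder\'on--Zygmund kernel estimates are in place with CZ-constant of order $\epsilon^{1/2}$, an application of the sharp $A_2$-theorem on the space of homogeneous type $\{bD, {\tt d},\lambda\}$ (\cite{Hy0}, see also \cite{Lacey0}) yields
\begin{equation*}
\|T_\epsilon^{s,\sharp}\|_{L^p(bD, \Op)\to L^p(bD, \Op)} \,\lesssim\, \epsilon^{1/2}\, [\Op]_{A_p}^{\max\{1,\,1/(p-1)\}},\qquad 1<p<\infty,
\end{equation*}
with an implied constant depending on $D$ and $p$ but independent of $\epsilon$ and $\Op$. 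For $T_\epsilon^{s,R}$, I would mimic the sharp maximal function argument from the proof of Theorem \ref{T:5.1}: using the improved size/smoothness estimates on $R_\epsilon(w,z)$ together with the $\epsilon^{1/2}$ gain, one obtains a pointwise bound of the form
\begin{equation*}
(T_\epsilon^{s,R} f)^{\#}(\tilde z)\,\lesssim\, \epsilon^{1/2}\, \Big(M(|f|^q)(\tilde z)\Big)^{1/q},\qquad 1<q<p,
\end{equation*}
for $\tilde z\in bD$, and then conclude via the Fefferman--Stein inequality together with the sharp weighted boundedness of the Hardy--Littlewood maximal function on $L^p(bD,\Op)$, exactly as in the passage from \eqref{Tsharp} to \eqref{R Lp}. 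The $L^1$-average term $\Omega_p(bD)\big((T_\epsilon^{s,R}f)_{bD}\big)^p$ is handled as in \eqref{R L1} using the unweighted $L^q(bD,\lambda)$-boundedness of $T_\epsilon^{s,R}$ with constant of order $\epsilon^{1/2}$.

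The principal obstacle, and where the bulk of the work lies, is the first step: establishing that the antisymmetric combination $(\EuScript C_\epsilon^s)^\dagger - \EuScript C_\epsilon^s$ genuinely inherits an $\epsilon^{1/2}$ factor not only in its $L^p(bD,\omega)$-operator norm (which is the statement of \eqref{E:Ess-small}) but already at the level of the pointwise kernel estimates of its constituent pieces $T_\epsilon^{s,\sharp}$ and $T_\epsilon^{s,R}$. This requires revisiting the proof of \cite[Proposition 18]{LS2017} and keeping the $\epsilon$-dependence explicit in every intermediate pointwise bound, rather than only tracking it at the stage where these bounds are integrated to produce operator norms. Once this refined kernel analysis is carried out, the remainder of the argument is a direct transcription of the quantitative template used in Theorem \ref{T:5.1}.
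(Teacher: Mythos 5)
Your treatment of the essential part matches the paper's: there $\EuScript A_\epsilon^s := \EuScript C_\epsilon^{\sharp,s}-(\EuScript C_\epsilon^{\sharp,s})^*$ really does have kernel size $\lesssim \epsilon\,{\tt d}(w,z)^{-2n}$ and smoothness carrying $\epsilon^{1/2}$ (this is \cite[(5.9)--(5.10)]{LS2017}), so $\epsilon^{-1/2}\EuScript A_\epsilon^s$ is a standard Calder\'on--Zygmund operator and \cite{Hy0} gives the sharp weighted bound. But there are two genuine gaps. First, $\dagger$ is the adjoint in $L^2(bD,\omega)$ with $\omega=\varphi\lambda$, so $(\EuScript C_\epsilon^s)^\dagger=\varphi^{-1}(\EuScript C_\epsilon^s)^*\varphi$ and hence $(\EuScript C_\epsilon^s)^\dagger-\EuScript C_\epsilon^s=\big((\EuScript C_\epsilon^s)^*-\EuScript C_\epsilon^s\big)-\varphi^{-1}[\varphi,(\EuScript C_\epsilon^s)^*]$. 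The Kerzman--Stein-type kernel cancellation lives only in the first bracket; your proposal tacitly identifies $\dagger$ with $*$ and never confronts the commutator term $\varphi^{-1}[\varphi,(\EuScript C_\epsilon^s)^*]$, which is small not by cancellation but because $\varphi$ is uniformly continuous and the kernel of $\EuScript C_\epsilon^s$ is supported where ${\tt d}(w,z)\lesssim s$. Controlling it is a separate argument (the paper imports the cube decomposition of \cite[Sections 6.2--6.3]{LS2017} and \cite[Lemma 24]{LS2017}), and it is one of the reasons the truncation scale $s$ must be tied to $\epsilon$.

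Second, the claim you single out as the crux --- that after antisymmetrization \emph{all} pointwise kernel bounds, including those of the remainder, carry a factor $\epsilon^{1/2}$ --- fails for the remainder piece. The kernel of $\EuScript R_\epsilon^{\sharp,s}-(\EuScript R_\epsilon^{\sharp,s})^*$ is only bounded by $\widetilde c_\epsilon\,\chi_s(w,z)\,{\tt d}(w,z)^{-(2n-1)}$, where $\widetilde c_\epsilon$ is a \emph{large} constant that blows up as $\epsilon\to 0$ (consistent with \eqref{E:oldcancellation2}); antisymmetrizing the remainder buys no gain. The actual mechanism is localization: the integrable singularity ${\tt d}(w,z)^{-(2n-1)}$ restricted to the support $\{{\tt d}(w,z)\lesssim s\}$ makes the operator norm $O(s\,\widetilde c_\epsilon)$ --- the paper runs precisely your sharp-maximal-function argument, but on the rescaled operator $\EuScript T:=s^{-1}\EuScript R_\epsilon^{\sharp,s}$ --- and one then \emph{chooses} $s=s(\epsilon)$ so small that $s\,\widetilde c_\epsilon\approx\epsilon^{1/2}$. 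Without replacing your proposed pointwise-cancellation mechanism by this smallness-of-$s$ mechanism, the remainder estimate does not close.
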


\vskip0.1in
 
 Here we prove the following
 
\begin{theorem}\label{T:5.2}
Let $D\subset \mathbb C^n$, $n\geq 2$, be  a bounded, strongly pseudoconvex domain of class $C^2$ and let $\lambda$ be the Leray Levi measure for $bD$. The following hold for any $b\in L^1(bD, \lambda)$, any $1<p<\infty$ and
any $0<\epsilon< \epsilon(D)$:

\vskip0.1in
{\rm(i)} 
If $b\in{\rm BMO}(bD, \lambda)$ then
  the commutator $[b, \EuScript C_\epsilon]$ is bounded on  $L^p(bD, \Op)$ for any  $A_p$-measure $\Op$, and
$$\| [b, \EuScript C_\epsilon] \|_{L^p(bD,\Op)\to L^p(bD,\Op)} \lesssim\, \|b\|_{{\rm BMO}(bD,\lambda)} \cdot { c_\epsilon}\cdot [\Op]_{A_p}^{2\cdot\max\{1,{1\over p-1}\}}.  $$
Conversely, if $[b, \EuScript C_\epsilon]$ is bounded on  $L^p(bD, \Op)$ for some $A_p$-measure $\Op$, 
 then
$b\in{\rm BMO}(bD, \lambda)$ with 
$$ \|b\|_{{\rm BMO}(bD,\lambda)}\, {\lesssim}\, \| [b, \EuScript C_\epsilon] \|_{L^p(bD,\Op)\to L^p(bD,\Op)} .$$
The implied constants depend solely on $p$ and $D$. 
\vskip0.1in

{\rm(ii)} If $b\in{\rm VMO}(bD, \lambda)$ then
  the commutator $[b, \EuScript C_\epsilon]$ is compact on  $L^p(bD, \Op)$ for any  $A_p$-measure $\Op$. 
Conversely, if $[b, \EuScript C_\epsilon]$ is compact on  $L^p(bD, \Op)$ for some $A_p$-measure $\Op$, 
 then
$b\in{\rm VMO}(bD, \lambda)$.
\vskip0.1in

Moreover, for any $A_2$-measure $\Ot$, and with $\EuScript C_\epsilon^\spadesuit$ denoting the adjoint of 
 $\EuScript C_\epsilon$ in $L^2(bD, \Ot)$, we have that
{\rm(i)}  and {\rm(ii)}  above also hold with 
{$[b, \EuScript C_\epsilon^\spadesuit]$}
in place of $[b, \EuScript C_\epsilon]$. 
\end{theorem}
\begin{proof}

\noindent {\it Proof of Part {\rm (i)}}: We begin with proving the sufficiency. Suppose $b$ is in ${\rm BMO}(bD,\lambda)$, and we now prove that 
the commutator $[b, \EuScript C_\epsilon]$ is bounded on $L^{p}(bD, \Omega_p)$.

Note that $[b, \EuScript C_\epsilon]= [b, \EuScript C_\epsilon^\sharp]+ [b, \EuScript R_\epsilon]$.
and that $\EuScript C_\epsilon^\sharp$ is a standard Calder\'on-Zygmund operator.
Following the standard approach (see for example \cite{LOR}), 
we obtain that 
$$\| [b, \EuScript C_\epsilon^\sharp] \|_{L^{p}(bD, \Omega_p)}
\lesssim \|b\|_{{\rm BMO}(bD,\lambda)} \cdot { c_\epsilon}\cdot  [\Omega_p]_{A_p}^{2\cdot\max\{1,{1\over p-1}\}}.  $$
Thus, it suffices to verify that $ [b, \EuScript R_\epsilon]$ is bounded on $L^p(bD,\Omega_p)$ with the correct quantitative bounds.

In fact, employing the
same decomposition as in the proof of Theorem \ref{T:5.1}, we  obtain  that 
\begin{align}\label{sm}
&([b,\EuScript R_\epsilon]f)^\# (\tilde z)
\\
&\lesssim
\|b\|_{{\rm BMO} (bD,\lambda)}\left( \big(M(|\EuScript R_\epsilon f|^\alpha)(\tilde z) \big)^{1\over \alpha}+ \big(M(| f|^\beta)(\tilde z) \big)^{1\over \beta}
+\left(M(|f|^\alpha)(\tilde z)\right)^{1\over \alpha}\right),\nonumber
\end{align}
where $1<\alpha,\beta<p, \tilde z\in bD$.  Hence, we have
\begin{align}\label{[b,R] Lp}
&\left\|[b,\EuScript R_\epsilon]f\right\|_{L^p(bD,\Omega_p)}^{p}\\
&\leq C\Big(\Omega_p(bD)\Big(([b,\EuScript R_\epsilon]f)_{bD}\Big)^p+\|([b,\EuScript R_\epsilon]f)^\#\|_{L^p(bD,\Omega_p)}^{p}\Big)\nonumber\\
&\lesssim \Omega_p(bD)\Big(([b,\EuScript R_\epsilon]f)_{bD}\Big)^p+[\Op]_{A_p}^{2p\cdot \max\{1,{1\over p-1}\}}\|b\|_{{\rm BMO} (bD,\lambda)}^p\|f\|_{L^p(bD,\Omega_p)}^{p},\nonumber
\end{align}
where the second inequality follows from \eqref{sm} and Theorem \ref{T:5.1}. Now it suffice to show that
\begin{align}\label{[b,R] L1}
\Omega_p(bD)\Big(([b,\EuScript R_\epsilon]f)_{bD}\Big)^p \lesssim [\Op]_{A_p}^{p\cdot \max\{1,{1\over p-1}\}} \|b\|_{{\rm BMO} (bD,\lambda)}^p\|f\|_{L^p(bD,\Omega_p)}^{p}.
\end{align}
By H\"older's inequality and the fact that $[b,\EuScript R_\epsilon]$ is bounded on $L^q(bD,\lambda)$ for any $1<q<\infty$, see \cite{DLLWW}, we have
\begin{align*}
\Omega_p(bD)\Big(([b,\EuScript R_\epsilon]f)_{bD}\Big)^p
&\leq  \Omega_p(bD) \left({1\over \lambda(bD)}\int\limits_{bD} \big|[b,\EuScript R_\epsilon]f(z)\big|^q  d\lambda(z)\right)^{p\over q}\\
&\lesssim \|b\|_{{\rm BMO} (bD,\lambda)}^p\Omega_p(bD) \left({1\over \lambda(bD)}\int\limits_{bD} \big|f(z)\big|^q  d\lambda(z)\right)^{p\over q}\\
&\lesssim  \|b\|_{{\rm BMO} (bD,\lambda)}^p\int\limits_{bD}\Big(M(|f|^q)(z)\Big)^{p\over q}\psi(z)d\lambda(z)\\
&\lesssim [\Op]_{A_p}^{p\cdot \max\{1,{1\over p-1}\}} \|b\|_{{\rm BMO} (bD,\lambda)}^p\|f\|_{L^p(bD,\Omega_p)}^{p}.
\end{align*}
Therefore, \eqref{[b,R] L1} holds, which, together with \eqref{[b,R] Lp}, implies that
$[b,\EuScript R_\epsilon]$ is bounded on $L^p(bD,\Omega_p).$   Combining the estimates for $[b, \EuScript C_\epsilon^\sharp]$ and $ [b, \EuScript R_\epsilon]$, we obtain that 
$$\| [b, \EuScript C_\epsilon] \|_{L^{p}(bD, \Omega_p)}
 \lesssim \|b\|_{{\rm BMO}(bD,\lambda)} \cdot { c_\epsilon}\cdot  [\Op]_{A_p}^{2\cdot\max\{1,{1\over p-1}\}}.  $$

We now prove the necessity. Suppose $b$ is in $L^1(bD,\lambda)$ and $[b,\EuScript C_\epsilon]$ is bounded on $L^p(bD,\Omega_p)$ for some $1<p<\infty$.

Let   $ C^\sharp_{1,\epsilon} (z,w)$ and $ C^\sharp_{2,\epsilon}(z,w)$ be
the real and imaginary parts of $ {C^\sharp_\epsilon(z,w)}$, respectively.
And let $ R_{1,\epsilon}(z,w)$ and $ R_{2,\epsilon}(z,w)$ be
the real and imaginary parts of $ {R_\epsilon(z,w)}$, respectively.  Then, combining the size and smoothness conditions \cite[(3.1), (3.2) in Theorem 3.1]{DLLW},
we get that there exist positive constants $\gamma_0$, $\mathcal A_3$, $\mathcal A_4$ and $\mathcal A_5$ such that 
for every ball $B=B_r(z_0)\subset bD$ with $r<\gamma_0$, there exists another ball
$\tilde B = B_r(w_0)\subset bD$ with  $\mathcal A_3r \leq {\tt d}(w_0,z_0) \leq (\mathcal A_3+1)r$
such that
at least one of the following properties holds:

$\mathfrak a)$ For every $z\in B$ and $w\in \tilde B$, $ C^\sharp_1(w,z)$ does not change sign and $| C^\sharp_{1,\epsilon}(z,w)|\geq {\mathcal  A_4\over {\tt d}(w,z)^{2n} };$

$\mathfrak b)$ For every $z\in B$ and $w\in \tilde B$, $ C^\sharp_2(w,z)$ does not change sign and $| C^\sharp_{2,\epsilon}(z,w)|\geq {\mathcal  A_5\over {\tt d}(w,z)^{2n} }.$

Then, without loss of generality, we assume that the property $\mathfrak a)$ holds. Then combining with the size estimate of $ {R(z,w)}$ as in the proof of \cite[Theorem 3.1]{DLLW} (see (3.4) there),
we obtain that there exists a positive constant $\mathcal A_6$ such that for every $z\in B$ and $w\in \tilde B$, $ C^\sharp_1(w,z)+ R_1(w,z)$
does not change sign and that 
\begin{align}\label{kernel lower bound C1 dag}
| C^\sharp_{1,\epsilon}(w,z)+ R_{1,\epsilon}(w,z)|\geq {\mathcal  A_6 \over  {\tt d}(w,z)^{2n} }.
\end{align}

We test the ${\rm BMO}(bD, \lambda)$ condition on the case of balls with big radius and small radius.
Case 1: In this case we work with balls with a large radius, $r\geq \gamma_0$. 

By \eqref{E:omegab} and by the fact that 
$\lambda(B)\geq \lambda( B_{\gamma_0}(z_0)) \approx \gamma_0^{2n}$, we obtain that
\begin{align*}
{1\over \lambda(B)} \int\limits_B|b(z)-b_B| d\lambda(z) & \lesssim  {1\over \lambda( B_{\gamma_0}(z_0))}  \|b\|_{L^1(bD, \lambda)} 
\lesssim \gamma_0^{-2n} \|b\|_{L^1(bD, \lambda)}.
\end{align*}
Case 2: In this case we work with balls with a small radius, $0<r<\gamma_0$.

We aim to prove that for every fixed ball $B=B_r(z_0)\subset bD$ with radius $r<\gamma_0$,
\begin{align}\label{BMO bd by C dag}
{1\over \lambda(B)} \int\limits_B|b(z)-b_B|d\lambda(z) \lesssim \|[b, \EuScript C_\epsilon] \|_{ L^{p}(bD, \Omega_p)\to L^{p}(bD, \Omega_p)},
\end{align}
which, combining with Case 1, finishes the proof of the necessity part.

Now let $\tilde B=B_r(w_0)$ be the ball chosen as above, and let $m_b(\tilde B)$ be the median value of $b$ on the ball $\tilde B$ with respect to the measure $\lambda$ defined as follows:
$m_b(\tilde B)$ is a real number that satisfies simultaneously
$$ \lambda(\{w\in \tilde B: b(w)>m_b(\tilde B)\})\leq {1\over2}\lambda(\tilde B)\quad {\rm and}\quad \lambda(\{w\in \tilde B: b(w)<m_b(\tilde B)\}) \leq {1\over2}\lambda(\tilde B).$$
Then, following the idea in \cite[Proposition 3.1]{LOR} 
by the definition of median value, we choose
$F_1:=\{ w\in \tilde B: b(w)\leq m_b(\tilde B) \}$ and
$F_2:=\{ w\in \tilde B: b(w)\geq m_b(\tilde B) \}$. Then it is direct that $\tilde B=F_1\cup F_2$, and moreover, from the definition of $m_b(\tilde B)$, we see that
\begin{align}\label{f1f2-1 dag}
\lambda(F_i) \geq{1\over 2} \lambda(\tilde B),\quad i=1,2.
\end{align}

Next we define
$E_1=\{ z\in B: b(z)\geq m_b(\tilde B) \}$ and  $
E_2=\{ z\in B: b(z)< m_b(\tilde B)\} $.
Then $B=E_1\cup E_2$ and $E_1\cap E_2=\emptyset$. 
Then it is clear that $b(z)-b(w)$ is non-negative for any $(z,w)\in E_1\times F_1$, and is negative for any $(z,w)\in E_2\times F_2$.
Moreover, for $(z,w)$ in $(E_1\times F_1 )\cup (E_2\times F_2)$, we have 
\begin{align}\label{bx-by-1 dag}
|b(z)-b(w)| 
&\geq |b(z)-m_b(\tilde B)|. 
\end{align}

Therefore, from \eqref{kernel lower bound C1 dag}, \eqref{f1f2-1 dag}, and \eqref{bx-by-1 dag} 
we obtain that 
\begin{align}
&{1\over \lambda(B)}\int\limits_{E_{1}}\big|b(z)-m_b(\tilde B)\big|d\lambda(z)\nonumber\\
&\lesssim 
{1\over \lambda(B)}{\lambda{(F_1)}\over  \lambda(B)}\int\limits_{E_{1}}\big|b(z)-m_b(\tilde B)\big| d\lambda(z) 
\nonumber\\ 
&\lesssim
{1\over \lambda(B)}\int\limits_{E_{1}}\int\limits_{F_{1}}{1\over {\tt d}(w,z)^{2n}}\big|b(z)-b(w)\big|  d\lambda(w) d\lambda(z)
\nonumber\\
&\lesssim
{1\over \lambda(B)}\int\limits_{E_{1}}\int\limits_{F_{1}}| { C^\sharp_{\epsilon,1}(w,z) }+  {R_{1,\epsilon}(w,z)}|\big(b(z)-b(w)\big) d\lambda(w)d\lambda(z)
\nonumber\\
&\lesssim
{1\over \lambda(B)}\int\limits_{E_{1}}\bigg|\int\limits_{F_{1}}{C_\epsilon(w,z)} \big(b(z)-b(w)\big)d\lambda(w)\bigg|d\lambda(z)\nonumber\\
&\lesssim
{1\over \lambda(B)}\int\limits_{E_{1}}\left|[b, \EuScript C_\epsilon](\chi_{F_1})(z)\right| 
d\lambda(z),
\label{key 1 dag}
\end{align}
where the last but second inequality follows from the fact that ${ C^\sharp_{\epsilon,1}(w,z) }+  {R_{1,\epsilon}(w,z)}$ is the real part of 
${C_\epsilon(w,z)} $.

Then, by using H\"older's inequality and the condition that $\Op\in A_p$ with the density function $\psi$, we further obtain that the right-hand side of \eqref{key 1 dag} is bounded by
\begin{align*}
&
{1\over \lambda(B)} \left(\,\int\limits_{E_1} \psi^{-{ p'\over p }}(z) d\lambda(z) \right)^{1\over p'}\bigg(\,\int\limits_{E_{1}}\left|[b, \EuScript C_\epsilon](\chi_{F_1})(z)\right|^{p } \psi(z)d\lambda(z)\bigg)^{1\over p}\\
 &\lesssim
  {1\over \Omega_p(B)}\lambda(B)\left(\Omega_p(B)\right)^{-{1\over p}}\left(\Omega_p(F_1)\right)^{1\over p}
\|[b, \EuScript C_\epsilon]\|_{L^{p}(bD, \Omega_p)\to L^{p}(bD, \Omega_p)}\\ 
 &\lesssim
\|[b, \EuScript C_\epsilon]\|_{L^{p}(bD, \Omega_p)\to L^{p}(bD, \Omega_p)}.
\end{align*}
Similarly, we can obtain that 
\begin{align*}
{1\over \lambda(B)}\int\limits_{E_{2}}\big|b(z)-m_b(\tilde B)\big|d\lambda(z)
 &\lesssim
\|[b, \EuScript C_\epsilon]\|_{L^{p}(bD, \Omega_p)\to L^{p}(bD, \Omega_p)}.
\end{align*}

As a consequence, we get that
\begin{align*}
&{1\over \lambda(B)}\int\limits_{B}\big|b(z)-m_b(\tilde B)\big|d\lambda(z)\\
&\lesssim{1\over \lambda(B)}\int\limits_{E_1}\big|b(z)-m_b(\tilde B)\big|d\lambda(z)
+{1\over \lambda(B)}\int\limits_{E_2}\big|b(z)-m_b(\tilde B)\big|d\lambda(z)\\
&\lesssim
\|[b, \EuScript C_\epsilon]\|_{L^{p}(bD, \Omega_p)\to L^{p}(bD, \Omega_p)}.
\end{align*}

Therefore,
\begin{align*}
{1\over \lambda(B)}\int\limits_{B}\big|b(z)-b_B\big|d\lambda(z)
&\leq {2\over \lambda(B)}\int\limits_{B}\big|b(z)-m_b(\tilde B)\big|d\lambda(z)
\lesssim \|[b, \EuScript C_\epsilon]\|_{L^{p}(bD, \Omega_p)\to L^{p}(bD, \Omega_p)},
\end{align*}
which gives 
\eqref{BMO bd by C dag}. 
Combining the estimates in Case 1 and Case 2 above, we see that 
$b$ is in BMO$(bD,\lambda)$.  The proof of Part (1) is concluded.

\vskip0.1in
\noindent {\it Proof of Part {\rm (ii)}}: 
We begin by showing the sufficiency. Suppose $b\in {\rm VMO}(bD,\lambda)$.
Note that $[b, \EuScript C_\epsilon]= [b, \EuScript C_\epsilon^\sharp]+ [b, \EuScript R_\epsilon]$, and that 
$[b, \EuScript C_\epsilon^\sharp]$ is a compact operator on $L^p(bD,\omega)$ (following the standard argument in \cite{KL2}), it suffices to verify that $ [b, \EuScript R_\epsilon]$ is compact on $L^p(bD,\Omega_p)$.
However, this follows from the approach in the proof of (ii)
of Theorem D (\cite[Theorem 1.1]{DLLWW}).

\medskip

We now prove the necessity. Suppose that $b\in {\rm BMO}(bD,\lambda)$ and that $[b, \EuScript C_\epsilon]$ is compact on $L^p(bD, \Omega)$ for some $1<p<\infty$. Without loss of generality, we assume that 
$\|b\|_{{\rm BMO}(bD,\lambda)}=1$.
\smallskip

We now use the idea from \cite{DLLWW}.
To show $b\in {\rm VMO}(bD, \lambda)$, we  seek the contradiction:  there is no bounded operator $T \;:\; \ell^{p} (\mathbb N) \to \ell^{p} (\mathbb N)$ with $Te_{j } = T e_{k} \neq 0$ for all $j,k\in \mathbb N$.  Here, $e_{j}$ is the 
standard basis for $\ell^{p} (\mathbb N)$.  
Thus, it suffices to construct the approximates to a standard basis in $\ell^{p}$, namely a sequence of functions $\{g_{j}\}$ such that 
$ \lVert g_{j } \rVert _{L^{p}(bD, \Omega_p) } \simeq 1$,  and for a nonzero $ \phi$,   we have $\lVert \phi - [b, \EuScript C_\epsilon] g_{j}\rVert 
 _{L^{p}(bD, \Omega_p) } <2^{-j}$.  
 
Suppose that $b\notin {\rm VMO}(bD, \lambda)$, then there exist $\delta_0>0$ and a sequence $\{B_j\}_{j=1}^\infty:=\{B_{r_j}(z_j)\}_{j=1}^\infty$ of balls such that 
\begin{align}\label{delta0}
{1\over \lambda(B_j)} \int\limits_{B_j} |b(z)-b_{B_j}| d\lambda(z)\geq \delta_0. 
\end{align}

Without loss of generality, we assume that for all $j$, $r_j<\gamma_0$, where $\gamma_0$ is 
the fixed constant in the argument for \eqref{kernel lower bound C1 dag}.

Now choose a subsequence $\{B_{j_i}\}$ of $\{B_j\}$ such that
\begin{align}\label{ratio1}
r_{j_{i+1}} \leq{1\over 4c_\omega} r_{j_{i}},
\end{align}
where $c_\omega$ is the constant such that
\begin{align}
c_\omega^{-1} r^{2n}\leq \lambda\big(B_r(w) \big)\leq c_\omega r^{2n},\quad 0<r\leq 1.
\end{align}
For the sake of simplicity we drop the subscript $i$, i.e., we still denote $\{B_{j_i}\}$ by $\{B_{j}\}$.

Then for each such $B_j$, we can choose a corresponding $\tilde B_j$.
Now let $m_b(\tilde B_j)$ be the median value of $b$ on the ball $\tilde B_j$ with respect to the measure $\omega d\sigma$.
Then,  by the definition of median value, we can
find disjoint subsets $F_{j,1},F_{j,2}\subset \tilde B_j$ such that
\begin{align*}
F_{j,1}\subset\{ w\in \tilde B_j: b(w)\leq m_b(\tilde B_j) \},\quad
 F_{j,2}\subset\{ w\in \tilde B_j: b(w)\geq m_b(\tilde B_j) \},
\end{align*}
and  
\begin{align}\label{f1f2-1 1}
\lambda(F_{j,1}) = \lambda(F_{j,2}) ={\lambda(\tilde B_j)\over 2} .
\end{align}

Next we define
$ E_{j,1}=\{ z\in B: b(z)\geq m_b(\tilde B_j) \},\ \ 
E_{j,2}=\{ z\in B: b(z)< m_b(\tilde B_j) \},
$
then $B_j=E_{j,1}\cup E_{j,2}$ and $E_{j,1}\cap E_{j,2}=\emptyset$. 
Then it is clear that 
$b(z)-b(w) \geq 0$ for $ (z,w)\in E_{j,1}\times F_{j,1}$ and 
$b(z)-b(w) < 0$ for $ (z,w)\in E_{j,2}\times F_{j,2}$.
And for $(z,w)$ in $(E_{j,1}\times F_{j,1} )\cup (E_{j,2}\times F_{j,2})$, we have 
\begin{align}\label{bx-by-1 1}
|b(z)-b(w)| 
&\geq |b(z)-m_b(\tilde B_j)|. 
\end{align}

We now consider 
$$\widetilde F_{j,1}:= F_{j,1}\bigg\backslash \bigcup_{\ell=j+1}^\infty \tilde B_\ell\quad{\rm and}\quad \widetilde F_{j,2}:= F_{j,2}\bigg\backslash \bigcup_{\ell=j+1}^\infty \tilde B_\ell,\quad {\rm for}\ j=1,2,\ldots.$$
Then, based on the decay condition of the radius $\{r_j\}$, we obtain that
for each $j$,
\begin{align}\label{Fj1}
\lambda(\widetilde F_{j,1}) &\geq \lambda(F_{j,1})- \lambda\Big( \bigcup_{\ell=j+1}^\infty \tilde B_\ell\Big) \geq
{1\over 2} \lambda(\tilde B_j)-\sum_{\ell=j+1}^\infty \lambda\big(  \tilde B_\ell\big)\nonumber\\
&\geq {1\over 2} \lambda(\tilde B_j)- {c_\lambda^2\over (4c_\lambda)^{2n}-1}\lambda(\tilde B_j)\geq {1\over 4} \lambda(\tilde B_j).
\end{align}

Now for each $j$, we have that
\begin{align*}
&{1\over \lambda(B_j)} \int\limits_{B_j} |b(z)-b_{B_j}| d\lambda(z)\\
&\leq{2\over \lambda(B_j)}\int\limits_{B_{j}}\big|b(z)-m_b(\tilde B_j)\big|d\lambda(z)\\
&= {2\over \lambda(B_j)}\int\limits_{E_{j,1}}\big|b(z)-m_b(\tilde B_j)\big|d\lambda(z) + {2\over \lambda(B_j)}\int\limits_{E_{j,2}}\big|b(z)-m_b(\tilde B_j)\big|d\lambda(z).
\end{align*}
Thus, combining with \eqref{delta0} and the above inequalities, we obtain that as least one of the following inequalities holds:
\begin{align*}
{2\over \lambda(B_j)}\int\limits_{E_{j,1}}\big|b(z)-m_b(\tilde B_j)\big|d\lambda(z) \geq {\delta_0\over2},\quad 
{2\over \lambda(B_j)}\int\limits_{E_{j,2}}\big|b(z)-m_b(\tilde B_j)\big|d\lambda(z) \geq {\delta_0\over2}.
\end{align*}
We may assume that the first one holds, i.e., 
\begin{align*}
{2\over \lambda(B_j)}\int\limits_{E_{j,1}}\big|b(z)-m_b(\tilde B_j)\big| d\lambda(z) \geq {\delta_0\over2}.
\end{align*}
Therefore, for each $j$, from \eqref{f1f2-1 1} and \eqref{bx-by-1 1} and by using \eqref{key 1 dag}, we obtain that 
\begin{align*}
{\delta_0\over4}&\leq{1\over \lambda(B_j)}\int\limits_{E_{j,1}}\big|b(z)-m_b(\tilde B_j)\big|d\lambda(z)\\
&\lesssim
{1\over \lambda(B_j) } \left(\int\limits_{E_{j,1}}\psi^{-{p'\over p}}(z)d\lambda(z)\right)^{1\over p'}\bigg( \int\limits_{bD}\big|[b, \EuScript C_\epsilon](\chi_{\widetilde F_{j,1}})(z)\big|^p\psi(z)d\lambda(z) \bigg)^{1\over p}\\
&\lesssim
{1\over \lambda(B_j) }\lambda(B_j) \Omega_p(B_j)^{-{1\over p}}\bigg( \int\limits_{bD}\big|[b, \EuScript C_\epsilon](\chi_{\widetilde F_{j,1}})(z)\big|^p\psi(z)d\lambda(z) \bigg)^{1\over p}\\
&\lesssim
\bigg( \int\limits_{bD}\big|[b, \EuScript C_\epsilon](f_j)(z)\big|^p\psi(z)d\lambda(z) \bigg)^{1\over p},
\end{align*}
where $ f_j := {\chi_{\widetilde F_{j,1}} \over \Omega_p(B_j)^{1\over p}}.$
Combining the above estimates we obtain  
$$ 0<\delta_0  \lesssim
\bigg( \int\limits_{bD}\big|[b, \EuScript C_\epsilon](f_j)(z)\big|^{p}\psi(z)d\lambda(z) \bigg)^{1\over p}.
$$

Moreover, since
 $\psi\in A_p$, it follows that there exist positive constants $C_1,C_2$ and $\sigma\in (0,1)$ such that for any measurable set $E\subset B$,
$$\Big({\lambda(E)\over \lambda(B)}\Big)^p\leq C_1{\Omega_p(E)\over \Omega_p(B)}\leq C_2\Big({\lambda(E)\over \lambda(B)}\Big)^{\sigma}.$$
Hence, from \eqref{Fj1}, we obtain that
$ {4^{-{1\over p}}}\lesssim \|f_j\|_{L^{p}(bD,\Omega_p )}\lesssim 1.  $
Thus, it is direct to see that $\{f_j\}_j$ is a bounded sequence in $L^{p}(bD,\Omega_p )$ with a uniform $L^{p}(bD,\Omega_p)$-lower bound away from zero.

Since $[b, \EuScript C_\epsilon]$ is compact, we obtain that the  sequence
$ \{[b, \EuScript C_\epsilon](f_j)\}_j $
has a convergent subsequence, denoted by
$$ \{[b, \EuScript C_\epsilon](f_{j_i})\}_{j_i}. $$
We denote the limit function by $g_0$, i.e.,
$$ [b, \EuScript C_\epsilon](f_{j_i})\to g_0 \quad{\rm in\ }  L^{p}(bD,\Omega_p ), \quad{\rm as\ } i\to\infty.$$
Moreover,  $g_{0} \neq 0$. 

After taking a further subsequence, labeled $\{g_{j}\}_{j=1}^\infty$, we have 
\begin{itemize} 
\item  $\lVert g_{j} \rVert _{L^{p} (bD, \Omega_p  )}  \simeq 1$; 
\item  $g_{j}$ are disjointly supported;
\item and $\lVert g_{0}  -  [b, \EuScript C_\epsilon] g_{j} \rVert _{L^{p} (bD, \Omega_p  )}  < 2^{-j}$.  
\end{itemize}

Take $a_{j } = j ^{ -1}$, so that $ \{a_{j}\}_{j=1}^\infty \in \ell^{p} \setminus \ell^{1}$.   It is immediate that $ \gamma = \sum_{j} a_{j} g_{j} \in 
L^{p} (bD, \Omega_p )$, hence $ [b, \EuScript C_\epsilon] \gamma \in L^{p} (bD, \Omega_p  )$.  But, $ g_{0} \sum_{j} a_{j} \equiv \infty$, and yet 
 \begin{align*}
\Bigl\lVert  g_{0} \sum_{j} a_{j} \Bigr\rVert _{L^{p} (bD, \Omega_p  )} 
& \leq \lVert  [b, \EuScript C_\epsilon] \gamma  \rVert _{L^{p} (bD, \Omega_p  )}  
+ \sum_{j=1}^\infty a_{j}  \lVert   g_{0} -  [b, \EuScript C_\epsilon] g_{j}  \rVert _{L^{p} (bD, \Omega_p  )}  < \infty.
\end{align*}
This contradiction shows that $b\in {\rm VMO}(bD, \lambda )$.

 
Note that all the functions $f_j$ are pairwise disjointly supported. We then take non-negative numerical sequence $\{a_j\}$ with 
$$ \|\{a_i\}\|_{\ell^p}<\infty \quad {\rm but}\quad   \|\{a_i\}\|_{\ell^1}=\infty.  $$
Then there holds
\begin{align*}
&\sum_{i=1}^\infty\bigg( a_i\|f_0\|_{L^p(bD,\omega d\sigma)} -  a_i \|f_0- [b, \EuScript C](f_{j_i})\|_{L^p(bD, \lambda)} \bigg)\\
&\leq \bigg\| \sum_{i=1}^\infty a_i [b, \EuScript C](f_{j_i}) \bigg\|_{L^p(bD, \lambda)}=
 \bigg\|  [b, \EuScript C]\Big(\sum_{i=1}^\infty a_i f_{j_i}\Big) \bigg\|_{L^p(bD, \lambda)}\lesssim  
\bigg\| \sum_{i=1}^\infty a_i f_{j_i} \bigg\|_{L^p(bD, \lambda)}\\
&\lesssim \|\{a_i\}\|_{\ell^p}.
\end{align*}
Above, we use the triangle inequality, and then the upper bound on the norm of the commutator, and then the disjoint support condition. But the left-hand side is infinite by design because
$$  \sum_{i=1}^\infty  a_i\|f_0\|_{L^p(bD, \lambda)}\gtrsim  \sum_{i=1}^\infty  a_i\delta_0=+\infty$$
and
\begin{align*}
\sum_{i=1}^\infty  a_i \|f_0- [b, \EuScript C_\epsilon](f_{j_i})\|_{L^p(bD, \lambda)} &\leq \bigg(\sum_{i=1}^\infty  a_i^p \bigg)^{1\over p}
\bigg(\sum_{i=1}^\infty \|f_0- [b, \EuScript C_\epsilon](f_{j_i})\|_{L^p(bD, \lambda)}^{p'}\bigg)^{1\over p'}\\
&\leq \|\{a_i\}\|_{\ell^p}\bigg(\sum_{i=1}^\infty 2^{-ip'}\bigg)^{1\over p'}\\
&\lesssim \|\{a_i\}\|_{\ell^p},
\end{align*}
which is a contradiction.  The proof of Part (2) is concluded, completing 
 the proof of Theorem \ref{T:5.2}.
\end{proof}

 \begin{remark}\label{remark power} 
 {  We point out that the term $\epsilon^{1/2}$ can be improved to $\epsilon^{\delta}$ for any fixed small $\delta>0$, according to \cite[Remark D]{LS2017} via choosing $\beta$ there arbitrarily close to 1.}  
  \end{remark}

\vskip0.2in

\section{
 The commutator of $\Sopo$ in $L^p(bD, \Op)$}
 \label{S:4}
\setcounter{equation}{0}

\subsection{A preliminary result}
Before tackling the commutator of $\Sopo$ in the maximal class $L^p(bD, \Op)$ we need to study its behavior on its subclass $L^p(bD, \omega)$ (that is, for Leray Levi measures; recall that Leray Levi-like measures are $A_p(bD)$-measures for any $1<p<\infty$)).

\begin{theorem}\label{cauchy1}
Let $D\subset \mathbb C^n$, $n\geq 2$, be  a bounded, strongly pseudoconvex domain of class $C^2$ and let $\lambda$ be the Leray Levi measure for $bD$. 
\noindent  The following hold for any $b\in L^2(bD, \lambda)$ and any $1<p<\infty$:
\vskip0.1in
$(1)$  If $b\in{\rm BMO}(bD, \lambda)$ then the commutator $[b, \Sopo]$ is bounded on  $L^p(bD, \omega)$ for any Leray Levi-like measure $\omega$ with 
$$ \|[b, \Sopo]\|_{L^p(bD, \omega)\to L^p(bD, \omega)} \lesssim \|b\|_{{\rm BMO}(bD, \lambda)};$$
Conversely,  {  suppose that both $[b, \Sopo]$ and $[b,\Cine](I-\Sopo)$ are  bounded on  $L^p(bD, \omega)$}  for some Leray Levi-like measure $\omega$,
then $b\in{\rm BMO}(bD, \lambda)$ with 
 \begin{align*}
\|b\|_{{\rm BMO}(bD, \lambda)}&\lesssim (1+ \|\Cine\|_{L^p(bD, \omega)\to L^p(bD, \omega) })
\|[b,\Sopo]\|_{L^p(bD, \omega)\to L^p(bD, \omega) }\nonumber\\
&\qquad+
\|[b,\Cine](I-\Sopo)\|_{L^p(bD, \omega)\to L^p(bD, \omega) }.
 \end{align*}
Here the implicit constants depend only on $p$, $D$ and $\omega$.

\vskip0.1in
$(2)$  If $b\in{\rm VMO}(bD, \lambda)$ then the commutator $[b, \Sopo]$ is compact on  $L^p(bD, \omega)$ for any Leray Levi-like measure $\omega$. 
Conversely, if both $[b, \Sopo]$ and $[b,\Cine](I-\Sopo)$ are compact on $L^p(bD, \omega)$  for some Leray Levi-like measure $\omega$,
then $b\in{\rm VMO}(bD, \lambda)$.
\vskip0.1in

\noindent The implied constants in $(1)$ and $(2)$ depend solely on $     p$, $\omega$ and $D$.
\end{theorem}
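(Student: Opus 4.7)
The plan is to bootstrap the known commutator results for the Cauchy--Leray integrals $\EuScript C_\epsilon$ and $\EuScript C_\epsilon^\dagger$ established in Theorem \ref{T:5.2}, via the operator identity \eqref{E:KS-comm-p}, to obtain the analogous statements for $[b,\Sopo]$. Since every Leray Levi-like measure $\omega$ is an $A_p$-measure for all $1<p<\infty$, Theorem \ref{T:5.2}(i)--(ii) applies directly with $\Op=\omega$, giving $L^p(bD,\omega)$-boundedness (resp.\ compactness) of $[b,\EuScript C_\epsilon]$ and $[b,\EuScript C_\epsilon^\dagger]$ whenever $b\in{\rm BMO}(bD,\lambda)$ (resp.\ $b\in{\rm VMO}(bD,\lambda)$); meanwhile $\Sopo$ is $L^p(bD,\omega)$-bounded by \eqref{E:LS-o}. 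Since the implicit constants are allowed to depend on $p,D,\omega$, no explicit $A_p$-tracking is required here.

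For the sufficiency in (1), I would fix $\epsilon=\epsilon(p,D,\omega)>0$ small enough that $\|(\EuScript C_\epsilon^s)^\dagger-\EuScript C_\epsilon^s\|_p<\tfrac12$ via \eqref{E:oldcancellation1}; then $\mathcal T_s^{-1}:=\big(I-((\EuScript C_\epsilon^s)^\dagger-\EuScript C_\epsilon^s)\big)^{-1}$ is bounded on $L^p(bD,\omega)$ by Neumann series. Applying \eqref{E:KS-comm-p} to $f$ in the dense H\"older class \eqref{E:Holder}, each term on the right is controlled: $[b,\EuScript C_\epsilon]$ and $[b,\EuScript C_\epsilon^\dagger]$ by Theorem \ref{T:5.2}(i), $\Sopo$ by \eqref{E:LS-o}, and $\mathcal T_s^{-1}$ by the Neumann series. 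The delicate self-referential term $[b,\Sopo]\circ((\EuScript R_\epsilon^s)^\dagger-\EuScript R_\epsilon^s)\circ\mathcal T_s^{-1}$ is handled via the smoothing bound \eqref{E:oldcancellation2}: together with $L^p(bD,\omega)\hookrightarrow L^1(bD,\omega)$ on the compact boundary, this factor maps $L^p(bD,\omega)\to L^\infty(bD,\omega)$. Then $[b,\Sopo]$ is applied to a bounded function $F$, and the pointwise estimate $|[b,\Sopo]F|\leq |b|\,|\Sopo F|+|\Sopo(bF)|$, combined with $b\in{\rm BMO}\subset L^p$ via \eqref{E:BMO-Lp} and $L^p$-boundedness of $\Sopo$, yields $\|[b,\Sopo]F\|_{L^p(bD,\omega)}\leq C(\|b\|_{\rm BMO},p,D,\omega)\|F\|_{L^\infty(bD,\omega)}$. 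No circularity arises, because this last bound is independent of the a priori $L^p\!\to\!L^p$ norm of $[b,\Sopo]$.

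For the necessity in (1), I would run the median-value scheme of Theorem \ref{T:5.2}(i) necessity, replacing $\EuScript C_\epsilon$ by $\Sopo$. The new ingredient is a kernel lower bound for $\Sopo$: from \eqref{E:LS-1} we have $\Sopo-\EuScript C_\epsilon=-\Sopo\circ(\EuScript C_\epsilon^\dagger-\EuScript C_\epsilon)$, and the cancellation of $\EuScript C_\epsilon^\dagger-\EuScript C_\epsilon$ supplies one extra order of boundary regularity. Consequently, the Szeg\H o kernel $S_\omega(z,w)$ has the same leading singularity as the essential Cauchy--Leray kernel $C_\epsilon^\sharp(z,w)$ modulo a kernel of strictly smaller order, so the sign-preservation and size lower bound \eqref{kernel lower bound C1 dag} transfers to $S_\omega$ on small ``twin'' balls. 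Testing $[b,\Sopo]$ against characteristic functions of such balls exactly as in the proof of Theorem \ref{T:5.2}(i) necessity yields $\|b\|_{{\rm BMO}(bD,\lambda)}\lesssim \|[b,\Sopo]\|_p$.

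Part (2) is obtained in parallel: sufficiency by reading \eqref{E:KS-comm-p} in the compact-operator ideal, using Theorem \ref{T:5.2}(ii) for compactness of $[b,\EuScript C_\epsilon]$ and $[b,\EuScript C_\epsilon^\dagger]$ and noting that $(\EuScript R_\epsilon^s)^\dagger-\EuScript R_\epsilon^s$ has a uniformly bounded kernel, hence is compact on $L^p(bD,\omega)$; necessity by the disjointly-supported test-function contradiction of Theorem \ref{T:5.2}(ii), again invoking the Szeg\H o kernel lower bound of the previous paragraph. The main obstacle in the whole program is the self-referential term in \eqref{E:KS-comm-p}: since $\|(\EuScript R_\epsilon^s)^\dagger-\EuScript R_\epsilon^s\|_{p\to p}$ may diverge as $\epsilon\to 0$, it cannot be absorbed by smallness in operator norm, and the smoothing-through-$L^\infty$ strategy outlined above is the essential substitute for that missing cancellation.
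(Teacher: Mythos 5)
Your sufficiency argument in Part (1) is essentially the paper's: you use the identity \eqref{E:KS-comm-p} on the dense H\"older class, invert $\mathcal T_\epsilon^s$ by choosing $\epsilon$ small via \eqref{E:oldcancellation1}, and control $[b,\EuScript C_\epsilon]$, $[b,\EuScript C_\epsilon^\dagger]$ and $\Sopo$ by Theorem \ref{T:5.2} and \eqref{E:LS-o}. Your treatment of the self-referential term is a mild (and correct) variant: where the paper first proves the $L^2(bD,\omega)$ case separately and then routes $[b,\Sopo]\circ\big((\EuScript R_\epsilon^s)^\dagger-\EuScript R_\epsilon^s\big)$ through $L^p\hookrightarrow L^1\to L^\infty\hookrightarrow L^2\to L^2\hookrightarrow L^p$ using that $L^2$ result, you instead bound $[b,\Sopo]$ directly from $L^\infty$ into $L^p$ via $\|b\,\Sopo F\|_{L^p}+\|\Sopo(bF)\|_{L^p}\lesssim\|b\|_{{\rm BMO}}\|F\|_{L^\infty}$ (using \eqref{E:BMO-Lp} and H\"older), which legitimately avoids the two-stage $L^2$-then-$L^p$ bootstrap and the final duality step. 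That part is fine.

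The necessity direction, however, has a genuine gap. You propose to transfer the kernel lower bound \eqref{kernel lower bound C1 dag} from $C_\epsilon^\sharp$ to the Szeg\H o kernel $S_\omega(z,w)$ by arguing that $\Sopo-\EuScript C_\epsilon=\Sopo\circ(\EuScript C_\epsilon^\dagger-\EuScript C_\epsilon)$ is ``one order smoother,'' and then to run the median-value test functions directly against $[b,\Sopo]$. This step is not justified and cannot be carried out in this setting: the whole premise of the paper is that no pointwise estimates (let alone sign-preserving lower bounds on twin balls) are available for the Cauchy--Szeg\H o kernel of a merely $C^2$ domain. The operator $\EuScript C_\epsilon^\dagger-\EuScript C_\epsilon$ is \emph{not} globally of lower singular order; only its truncation $(\EuScript C_\epsilon^s)^\dagger-\EuScript C_\epsilon^s$ is small (in operator norm, with the full singularity ${\tt d}(w,z)^{-2n}$ and small coefficient $\epsilon$), while the remainder piece has norm blowing up as $\epsilon\to0$; and in any case an operator-norm statement about $\Sopo\circ(\EuScript C_\epsilon^\dagger-\EuScript C_\epsilon)$ yields no pointwise control of $S_\omega(z,w)-C_\epsilon(z,w)$ without a priori kernel information on $\Sopo$, which is exactly what is missing. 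The paper's route avoids this entirely: from the identity $[b,\EuScript C_\epsilon]\Sopo f=(I-\EuScript C_\epsilon)[b,\Sopo]f$ one deduces that $[b,\EuScript C_\epsilon]$ is bounded from $H^p(bD,\omega)$ to $L^p(bD,\omega)$, hence by duality $[b,\EuScript C_\epsilon]^\dagger$ (and so $[b,\EuScript C_\epsilon^*]$) is bounded on all of $L^{p'}(bD,\omega)$, and then the necessity half of Theorem \ref{T:5.2} --- applied to the Cauchy--Leray commutator, whose kernel \emph{is} explicit and does satisfy the lower bound --- yields $b\in{\rm BMO}(bD,\lambda)$. The same issue affects your necessity argument in Part (2), which relies on the same unproved Szeg\H o kernel lower bound; it should likewise be rerouted through the compactness of $[b,\EuScript C_\epsilon]^\dagger$ on $L^{p'}$ and Theorem \ref{T:5.2}(ii).
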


  \begin{proof}[Proof of Part \textnormal{(1)}]
We first prove the sufficiency: we suppose that $b \in {\rm BMO}(bD, \lambda)$ and show that
$[b, \Sopo]: L^p(bD, \omega) \to L^p(bD, \omega)$ is bounded for all $1<p<\infty$.
Note that by duality it suffices to show that
 $[b, \Sopo]: L^p(bD, \omega) \to L^p(bD, \omega)$ is bounded for $1<p\leq 2$. 
 
We first establish boundedness in $L^2(bD, \omega)$. 
The starting point are the  following basic identities for any fixed $0<\epsilon<\epsilon(D)$:   
  \begin{equation}\label{E:basic}
  \Sopo \EuScript C_\epsilon^\dagger f =
  (\EuScript C_\epsilon \EuScript S_\lambda^\dagger)^\dagger f = (\EuScript C_\epsilon \Sopo)^\dagger f = (\Sopo)^\dagger f = \Sopo f\, ,
  \end{equation}
which are valid for any $f\in L^2(bD, \omega)$ and for any $\epsilon$ (whose value is of no import here). We recall that the upper-script ``$\dagger$'' denotes the adjoint in $L^2(bD, \omega)$.

A computation that uses \eqref{E:basic} gives that
\begin{equation}\label{E:2}
-\Sopo [b, T_\epsilon]f + 
 \Sopo b T_\epsilon f = \EuScript C_\epsilon( bf)
\end{equation}
is true with
$$
T_\epsilon : = I-(\EuScript C^\dagger_\epsilon - \EuScript C_\epsilon)
$$
whenever $f$ is taken in the H\"older-like subspace \eqref{E:Holder} --  the latter ensuring that 
all terms in \eqref{E:2} are meaningful; more precisely for such functions $f$ we have that $bf\in L^2(bD, \omega)$, since
$b\in {\rm BMO}(bD, \lambda) \subset L^2(bD, \lambda)$ on account of \eqref{E:BMO-Lp}, and
$L^2(bD, \lambda) = L^2(bD, \omega)$ by \eqref{E:Leray Levi to sigma}. We also have that  $bT_\epsilon f\in L^2(bD, \omega)$ because $T_\epsilon f \in C(bD)$ by \cite[Proposition 6 and (4.1)]{LS2017}. On the other hand, the
classical Kerzman--Stein identity  \cite{KS}
\begin{equation}\label{E:KS}
  \Sopo T_\epsilon f
  =\EuScript C_\epsilon f,\quad f\in L^2(bD, \omega),
 \end{equation}
 gives that
  \begin{equation}\label{E:1}
  b  \Sopo T_\epsilon f= b \EuScript C_\epsilon f,\quad f\in L^2(bD, \omega).
  \end{equation}
 Combining \eqref{E:2} and \eqref{E:1}  we obtain
 \begin{equation}\label{E:dense1}
 [ b, \Sopo ] T_\epsilon f = \big([b , \EuScript C_\epsilon ]\, -\, \Sopo  [b, T_\epsilon ]\big)f 
 \end{equation}
 whenever $f$ is in the H\"older-like space \eqref{E:Holder}. However the righthand side of \eqref{E:dense1}
 is meaningful and indeed bounded in $L^2(bD, \omega)$ by Theorem \ref{T:5.2} (which applies to Leray Levi-like measures);
  thus \eqref{E:dense1}
 extends to an identity on $L^2(bD, \omega)$.
Furthermore, we have  that $T_\epsilon$ is invertible in $L^2(bD, \omega)$ as a consequence of
the following two facts (1.), $\EuScript C_\epsilon$ and $(\EuScript C_\epsilon)^\dagger$ are bounded in $L^2(bD, \omega)$ and (2.),
 $T_\epsilon$ is skew adjoint (that is, $(T_\epsilon)^\dagger = - T_\epsilon$); see  the proof in \cite[p. 68]{LS2004} which applies verbatim here. We conclude that
 \begin{equation}\label{E:dense-2}
 [ b, \Sopo ] g = \big([b , \EuScript C_\epsilon ]\, -\, \Sopo  [b, T_\epsilon ]\big)\circ T_\epsilon^{-1} g,\quad g\in L^2(bD, \omega).
 \end{equation}
But the righthand side of \eqref{E:dense-2} is bounded in $L^2(bD, \omega)$ by what has just been said. Thus $[b, \Sopo]$ is also bounded, with 
\begin{align}\label{bS L2norm}  
\|[b, \Sopo]\|_{2}\lesssim \|T_\epsilon ^{-1}\|_{2}\,\|b\|_{{\rm BMO}(bD, \lambda)}\lesssim\|b\|_{{\rm BMO}(bD, \lambda)}.
\end{align}

\vskip0.1in

We next prove boundedness on $L^p(bD, \omega)$ for $1<p<2$ (as we will see in \eqref{E:choose-eps} below, it is at this stage that the choice of $\epsilon$ is relevant).
We start by combining the ``finer'' decomposition of $\EuScript C_\epsilon$, see \eqref{E:Cs-op}, 
with the classical Kerzman--Stein identity \eqref{E:KS}, which give us
\begin{equation}\label{E:new-dec}
\EuScript C_\epsilon 
= \Sopo\big(\mathcal T_\epsilon^s + \mathcal R_\epsilon^s\big)\quad \mbox{in}\ 
L^2(bD, \omega),  
\end{equation}
where
$$ \mathcal T_\epsilon^s
 := I- \big((\EuScript C_\epsilon^s)^\dagger - \EuScript C_\epsilon^s\big)\ \equiv\ I-\EuScript E_\epsilon^s $$
see \eqref{E:Ess-small}, and
$$\mathcal R_\epsilon^s := \EuScript R_\epsilon^s - (\EuScript R_\epsilon^s)^\dagger$$
see \eqref{E:RsBdd}. Plugging \eqref{E:new-dec} in \eqref{E:dense1} gives us
\begin{equation}\label{E:dense3}
 [ b, \Sopo ] \mathcal T_\epsilon^s f = \big([b , \EuScript C_\epsilon]\, -\, \Sopo  [b, T_\epsilon ] \, -\, [b, \Sopo]\mathcal R_\epsilon^s\big)f 
 \end{equation}
 whenever $f$ is in the H\"older-like space \eqref{E:Holder}. We claim that all three terms in the righthand side of \eqref{E:dense3} are in fact meaningful in $L^p(bD, \omega)$: the first two terms are so by the results of \cite{DLLWW} and \cite{LS2017}; on the other hand, the boundedness of the third term is a consequence of the boundedness of $[b,  \Sopo]$ in $L^2(bD, \omega)$ that was just proved, and of the mapping properties \eqref{E:RsBdd}, giving us:
 \begin{align}\label{E:mapping-1}
[b,\Sopo] \mathcal R_\epsilon^s : 
L^p(bD, \omega) &\hookrightarrow L^1(bD, \omega) \to L^\infty(bD, \omega) \\\notag
&\hookrightarrow L^2(bD, \omega) \to L^2(bD, \omega) \hookrightarrow L^p(bD, \omega).
\end{align}
It is at this point that it is necessary to make a specific choice of $\epsilon$.  Given $1<p<2$ we pick
 $\epsilon$ (hence $s= s(\epsilon)$) sufficiently small so that the  operator $\mathcal T_\epsilon^s$ is invertible on $L^p(bD, \omega)$ (with bounded inverse) on account of \eqref{E:Ess-small}. That is:
\begin{equation}\label{E:choose-eps}
\epsilon^{1/2} M_p:= \epsilon^{1/2}\left({p\over p-1} +p\right) <1\, .
\end{equation}
Combining \eqref{E:dense3} with the above considerations we obtain
\begin{equation}\label{E:dense-p}
 [ b, \Sopo ] g = \big([b , \EuScript C_\epsilon]\, -\, \Sopo  [b, T_\epsilon ] \, -\, [b, \Sopo]\mathcal R_\epsilon^s\big)\circ ( \mathcal T_\epsilon^s)^{-1} g,\quad g\in L^p(bD, \omega).
 \end{equation}
We conclude that $ [b,\Sopo]$ is bounded on $L^p(bD, \omega)$ with
$$ 
\|[b, \Sopo]\|_{p}\lesssim
 \bigg(1+ \|\Sopo\|_{p}+\,
 \|T_\epsilon^{-1}\|_{2}
 \|\mathcal R_\epsilon^s\|_{1,\infty}
 \bigg)\|(\mathcal T_\epsilon ^s)^{-1}\|_{p}
  \|b\|_{{\rm BMO}(bD, \lambda)}.
$$

\vskip0.1in

We next prove the  necessity.  Suppose that both $[b,\Sopo]$ and $[b,\Cine](I-\Sopo)$ are bounded on $L^p(bD, \omega)$ for some $1<p<\infty$ with $0<\epsilon<\epsilon (D)$.

From \eqref{E:dense1} we obtain that for any $f$  in the H\"older-like space \eqref{E:Holder},
\begin{align*}
 [ b, \Sopo ] T_\epsilon f &= [b , \EuScript C_\epsilon ](f)\, -\, \Sopo  [b, I-(\EuScript C^\dagger_\epsilon - \EuScript C_\epsilon) ](f) \\
 &= [b , \EuScript C_\epsilon ](f)\, +\, \Sopo  [b, \EuScript C^\dagger_\epsilon] (f) - \Sopo  [b,\EuScript C_\epsilon ](f) .
\end{align*}
Thus, we have
\begin{align}\label{eee1}
 (I- \Sopo)  [b,\EuScript C_\epsilon ](f)  
 &= [ b, \Sopo ] T_\epsilon f  -\Sopo  [b, \EuScript C^\dagger_\epsilon] (f).
\end{align}
To continue, observe that the basic identity
$$
(\Sopo)f= (\EuScript C_\epsilon\Sopo)f\quad \mbox{for any}\ f\in L^2(bD, \omega)$$
 grants that the following equality
 \begin{align}\label{E:cond2}
 [b, \EuScript C_\epsilon]\Sopo f = (I-\EuScript C_\epsilon)[b,  \Sopo ] f
\end{align}
is
valid
whenever $f$ is  in the H\"older-like space \eqref{E:Holder}. Now the righthand side of \eqref{E:cond2} extends to a bounded operator on  $L^p(bD, \omega)$ by the main  result of \cite{DLLWW} along with our assumption on $[b,  \Sopo ]$. Thus, $ [b, \EuScript C_\epsilon]\Sopo$ in the left-hand side of \eqref{E:cond2}  extends to 
a bounded operator on $L^p(bD, \omega)$. 

By the assumption that $[b,\Cine](I-\Sopo)$ is bounded on $L^p(bD, \omega)$
and the fact that 
  $$[b,\Cine] =[b,\Cine]\Sopo + [b,\Cine](I-\Sopo),$$ we obtain that $[b,\Cine]$ extends to 
a bounded operator on $L^p(bD, \omega)$ with the norm
\begin{align}\label{E:cond4}
&\|[b,\Cine]\|_{L^p(bD, \omega)\to L^p(bD, \omega) }\\
&\leq
\|[b,\Cine]\Sopo\|_{L^p(bD, \omega)\to L^p(bD, \omega) }+
\|[b,\Cine](I-\Sopo)\|_{L^p(bD, \omega)\to L^p(bD, \omega) }\nonumber\\
&\leq (1+ \|\Cine\|_{L^p(bD, \omega)\to L^p(bD, \omega) })
\|[b,\Sopo]\|_{L^p(bD, \omega)\to L^p(bD, \omega) }\nonumber\\
&\qquad+
\|[b,\Cine](I-\Sopo)\|_{L^p(bD, \omega)\to L^p(bD, \omega) }.\nonumber
\end{align}

We now denote by $$[b, \EuScript C_\epsilon]^\dagger: L^{p'}(bD, \omega)\to 
L^{p'}(bD, \omega)$$
the duality of $[b, \EuScript C_\epsilon]$.

Here the duality goes through the following sense: for every $f,g$ in the H\"older-like space \eqref{E:Holder}, we have that
\begin{align*}
\langle [b, \EuScript C_\epsilon] (f), g\rangle  =\langle  f, [b, \EuScript C_\epsilon]^\dag g\rangle.
\end{align*}
Note that the associated kernel of $[b, \EuScript C_\epsilon]$ is given by 
$$ T(w,z) = (b(z)-b(w))C_\epsilon(w,z),\qquad w\not=z.$$
We have that the kernel of $ [b, \EuScript C_\epsilon]^\dag$ is 
$$ T^\dag(w,z) = -(b(w)-b(z))\overline{C_\epsilon(w,z)} .$$

It follows by duality of \eqref{E:cond4} that
\begin{equation}\label{E:mapping-2}
[b, \EuScript C_\epsilon]^\dagger: L^{p'}(bD, \omega)\to 
L^{p'}(bD, \omega).
\end{equation}
is bounded and that
$\displaystyle{ \|[b, \EuScript C_\epsilon]^\dagger \|_{L^{p'}(bD, \omega)\to 
L^{p'}(bD, \omega)} 
 \leq \|[b, \EuScript C_\epsilon] \|_{L^{p}(bD, \omega)\to 
L^{p}(bD, \omega)}}. $

Since $[b, \EuScript C_\epsilon]^\dagger $ is bounded on $L^{p'}(bD,\omega)$ and $\varphi$ has  uniform positive upper and lower bounds, we consider the 
commutator
$[b, \EuScript C_\epsilon]^{\!*}$ with the kernel $T^*(w,z) = \varphi(z)^{{1\over p'}} T^\dag(w,z)\varphi^{{1\over p}}(w) $, i.e.,
$$[b, \EuScript C_\epsilon]^{\!*}(f)(z) =\int_{bD}T^*(w,z) f(w)d\lambda(w),\quad f\in L^{p'}(bD, \lambda).$$
Then we 
obtain that for $f\in L^{p'}(bD, \lambda)$,
\begin{align*}
\|[b, \EuScript C_\epsilon]^{\!*}(f)\|_{L^{p'}(bD, \lambda)}
&= \bigg( \int_{bD} \Big| \int_{bD} T^*(w,z) f(w) d\lambda(w) \Big|^{p'} d\lambda(z)  \bigg)^{1\over p'}\\
&= \bigg( \int_{bD} \Big| \int_{bD} \varphi(z)^{-{1\over p'}} T^*(w,z)\varphi^{-1}(w) f(w)\ \varphi(w) d\lambda(w) \Big|^{p'} \varphi(z)d\lambda(z)  \bigg)^{1\over p'}\\
&= \bigg( \int_{bD} \Big| \int_{bD}  T^\dag(w,z)\cdot \varphi^{-{1\over p'}}(w) f(w)\ \varphi(w) d\lambda(w) \Big|^{p'} \varphi(z)d\lambda(z)  \bigg)^{1\over p'}\\
&=\| [b, \EuScript C_\epsilon]^\dagger ( \varphi^{-{1\over p'}} f) \|_{ L^{p'}(bD, \omega) }\\
&\leq \|[b, \EuScript C_\epsilon]^\dagger \|_{L^{p'}(bD, \omega)\to 
L^{2}(bD, \omega)}\| \varphi^{-{1\over p'}} f \|_{ L^{p'}(bD, \omega) }\\
&= \|[b, \EuScript C_\epsilon]^\dagger \|_{L^{p'}(bD, \omega)\to 
L^{2}(bD, \omega)}\|  f \|_{ L^{p'}(bD, \lambda) },
\end{align*}
which implies that 
$ \|[b, \EuScript C_\epsilon]^{\!*} \|_{L^{2}(bD, \lambda)\to L^{2}(bD, \lambda)}  \lesssim  \|[b, \EuScript C_\epsilon]^\dagger \|_{L^{2}(bD, \omega)\to 
L^{2}(bD, \omega)} $.

Moreover, from the kernel of $[b, \EuScript C_\epsilon]^\dagger$, we further have that for $f$ with $z\not\in {\rm supp} f$, 
\begin{align*} 
[b, \EuScript C_\epsilon]^{\!*}(f)(z) &= \int_{bD} \varphi(z)^{{1\over p'}} T^\dag(w,z)\varphi^{{1\over p'}}(w) f(w)d\lambda(w) \\
&= \int_{bD} \varphi(z)^{{1\over p'}} (b(z)-b(w))\overline{C_\epsilon(w,z)} \varphi^{{1\over p'}}(w) f(w)d\lambda(w) \\
&= \int_{bD} \varphi(z)^{{1\over p'}} (b(z)-b(w))\overline{C^\sharp_\epsilon(w,z)} \varphi^{{1\over p'}}(w) f(w)d\lambda(w) \\
&\quad+ \int_{bD} \varphi(z)^{{1\over p'}} (b(z)-b(w))\overline{R_\epsilon(w,z)} \varphi^{{1\over p'}}(w) f(w)d\lambda(w) \\
&=: [b, (\EuScript C_\epsilon^\sharp)^*] (f)(z)+ [b, (\EuScript R_\epsilon)^*](f)(z),
\end{align*}
where the third equality follows from \eqref{E: C kernel}.

Recall that 
$$                   |C^\sharp_\epsilon(w,z)|\geq A_2 {\displaystyle1\over\displaystyle{\tt d}(w,z)^{2n}},
$$
and that
$$|R_\epsilon(w,z)|\leq C_R {\displaystyle1\over \displaystyle{\tt d}(w,z)^{2n-1}}.$$

As a consequence, we see that noting that $\varphi$ has  uniform positive upper and lower bounds, and by applying Theorem \ref{T:5.2}  to $[b, \EuScript C_\epsilon^{*}]$, we obtain that
 $b\in {\rm BMO}(bD, \lambda)$ with 
 $ \|b\|_{{\rm BMO}(bD, \lambda)}\lesssim \|[b, \EuScript C_\epsilon]^{\!*} \|_{L^{p'}(bD, \lambda)\to L^{p'}(bD, \lambda)} $, which further implies that
 \begin{align*}
   \|b\|_{{\rm BMO}(bD, \lambda)}\lesssim  \|[b, \EuScript C_\epsilon]^\dagger \|_{L^{p'}(bD, \omega)\to 
L^{p'}(bD, \omega)} \leq\|[b, \EuScript C_\epsilon] \|_{L^{p}(bD, \omega)\to 
L^{p}(bD, \omega)},
 \end{align*}
 where the implicit constant is independent of those $\epsilon$ in $(0,\epsilon(D))$ (see Theorem \ref{T:5.1}).

 Then, combining with \eqref{E:cond4}, we further have
 \begin{align*}
\|b\|_{{\rm BMO}(bD, \lambda)}&\lesssim  (1+ \|\Cine\|_{L^p(bD, \omega)\to L^p(bD, \omega) })
\|[b,\Sopo]\|_{L^p(bD, \omega)\to L^p(bD, \omega) }\nonumber\\
&\qquad+
\|[b,\Cine](I-\Sopo)\|_{L^p(bD, \omega)\to L^p(bD, \omega) }.\nonumber
 \end{align*}

The proof of Part (1) is concluded.
\end{proof}

\begin{proof}[Proof of Part \textnormal{(2)}] 
Suppose that
$b$ is in ${\rm VMO}(bD, \lambda)$. We claim that $[b, \Sopo]$  is compact on $L^2(bD, \omega)$.  This is immediate from \eqref{E:dense-2} which shows that  $[b, \Sopo]$ is the composition of compact operators (namely $[b , \EuScript C_\epsilon ]$ and $[b, T_\epsilon ]$, by Theorem \ref{T:5.2})
 with the operators $T^{-1}_\epsilon$ (which is bounded by the results of \cite{LS2017}) and $\Sopo$ (trivially bounded in $L^2(bD, \omega)$). The compactness in
 $L^p(bD, \omega)$ for $1<p<2$ follows by applying this same argument to the identity \eqref{E:dense-p}, once we point out that the extra term $[b, \Sopo]\mathcal R_\epsilon^s$ which occurs in the righthand side of \eqref{E:dense-p} is compact in $L^p(bD, \omega)$ on account of the compactness, just proved, of $[b, \Sopo]$ in $L^2(bD, \omega)$, and the chain of bounded inclusions \eqref{E:mapping-1};
 the compactness in the range $2<p<\infty$ now follows by duality. This concludes the proof of sufficiency.

\vskip0.1in
To prove the necessity, we suppose that $b\in {\rm BMO}(bD, \lambda)$, 
 $[b,\Sopo]$ and $[b,\Cine](I-\Sopo)$ are compact on $L^p(bD, \omega)$ for some $1<p<\infty$.
  We now prove that $b\in {\rm VMO}(bD, \lambda)$.

Since $[b,\Sopo]$ is compact on $L^p(bD, \omega)$ for some $1<p<\infty$, 
by \eqref{E:cond2}, we see that $[b, \Cine]\Sopo$  extends to a  compact operator on $L^p(bD, \omega)$.

This, together with the assumption that  $[b,\Cine](I-\Sopo)$ is compact on $L^p(bD, \omega)$,  further  shows that $[b,\EuScript C_\epsilon]$ is compact as an operator from 
$L^p(bD, \omega)\to L^p(bD, \omega)$ since it is the linear combination of compositions of a compact operator with the bounded operators. Thus 
$$
[b,\EuScript C_\epsilon]^\dagger: L^{p'}(bD, \omega)\to  L^{p'}(bD, \omega)
$$
is compact by duality. 

Following the argument at the end of the proof of Part (1), we see that this implies that 
$b\in {\rm VMO}(bD, \lambda)$ by
Theorem \ref{T:5.2}.

The proof of Theorem \ref{cauchy1} is concluded.
\end{proof}

\vskip0.1in
\subsection{The commutator of $\Sopo$: proof of Theorem \ref{cauchy4}} We may now proceed to study the behavior of the commutator $[b, \Sopo$ on the maximal $L^p$-spaces $L^p(bD, \Op)$. We prove all parts of Theorem \ref{cauchy4} one at a time. 

\vskip0.1cm
\noindent {\it Proof of Part \textnormal{(1)}}.
 We first prove the sufficiency. To this end,
it suffices to show that 
  \begin{equation}\label{E:extrapol-comm}
  \|[b, \Sopo]g\|_{L^2(bD, \Ot)}\lesssim  [\Ot]_{A_2}^2\|b\|_{{\rm BMO}(bD,\lambda)} \|g\|_{L^2(bD, \Ot)}
 \end{equation}
 holds for any $g\in C(bD)$ and for any $A_2$-like measure $\Ot$, where the implied constant depends only on $\omega$ and $D$, because the $L^p$-estimate \eqref{Lp-est-1}
  will then follow by extrapolation \cite[Section 9.5.2]{G}.
To prove \eqref{E:extrapol-comm}, for any $\epsilon >0$ we write
 $$
 [b, \Sopo]g = \tilde A_\epsilon g + \tilde B_\epsilon g + C_\epsilon g\quad \text{where}
 $$
 \begin{equation*}
 \tilde A_\epsilon g := 
 [b, \EuScript C_\epsilon]\circ\Tinv g\,;\quad 
 \tilde B_\epsilon g := 
-   [b, \Sopo]\circ\big((\EuScript R^s_\epsilon)^\dagger-\EuScript R^s_\epsilon\big)\circ 
 \Tinv g\, ,
  \end{equation*}
  and
  $$
  C_\epsilon g: = \Sopo\circ[b, I-\big((\EuScript R^s_\epsilon)^\dagger-\EuScript R^s_\epsilon\big)]\circ\Tinv g
  $$
  where again
  $$\mathcal T^s_\epsilon h := \bigg(I - \big((\EuScript C^s_\epsilon)^\dagger - \EuScript C^s_\epsilon\big)\bigg)h.$$
  
We first consider   $ \tilde A_\epsilon$.  By choosing $\epsilon = \epsilon (\Ot)$ as in the proof of \cite[Theorem 1.1]{DLLW} (see (4.1.) there), we see that $\Tinv$ is bounded on $L^2(bD, \Ot)$ with 
$\|\Tinv\|_{ L^2(bD, \Ot)\to L^2(bD, \Ot) }\leq2$.
Hence Theorem \ref{T:5.2} grants
\begin{align*}
  \| \tilde A_\epsilon g \|_{L^2(bD, \Ot)}\lesssim \|b\|_{{\rm BMO}(bD,\lambda)}\cdot {  [\Ot]_{A_2}^{4}} \cdot \|g\|_{L^2(bD, \Ot)}.
\end{align*}
  
To control the operator $\tilde B_\epsilon$, with same $\epsilon$ as above, it suffices to prove the boundedness of $[b, \Sopo]\circ\big((\EuScript R^s_\epsilon)^\dagger-\EuScript R^s_\epsilon\big)$. To this end, we combine
the mapping properties \eqref{E:RsBdd} with Part (1) of Theorem \ref{cauchy1} and the reverse H\"older's inequality, and obtain that
 \begin{align}\label{E:mapping-L2 norm}
[b, \Sopo]\circ\big((\EuScript R^s_\epsilon)^\dagger-\EuScript R^s_\epsilon\big) : 
L^2(bD, \Omega_2) & \hookrightarrow L^1(bD, \omega) \to L^\infty(bD, \omega)  \\\notag
&\hookrightarrow L^{2p_0}(bD, \omega) \to L^{2p_0}(bD, \omega) \hookrightarrow L^2(bD, \Omega_2);
\end{align}
here $p_0>2$ has been chosen so that its H\"older conjugate $p'_0$ satisfies 
$$ \bigg( \int_{bD} \Omega_2^{p'_0}(z)d\omega(z) \bigg)^{1\over p'_0} \leq M(D,\omega) \int_{bD} \Omega_2(z)d\omega(z)  = M(D,\omega)\,  \Omega_2(bD)\,,  $$
where the constant $M(D,\omega)$ {is independent of $\Omega_2$}.
Moreover, by writing $h:= \Tinv g$, $\overline H = \big((\EuScript R^s_\epsilon)^\dagger-\EuScript R^s_\epsilon\big) h$ and $\tilde B_\epsilon g =-[b, \Sopo] (\overline H)$,  we have that
\begin{align*}
\| \overline H \|_{ L^{2p_0}(bD, \omega) }&\leq \omega(bD)^{1\over 2p_0} \|\big((\EuScript R^s_\epsilon)^\dagger-\EuScript R^s_\epsilon\big) h\|_{L^\infty(bD, \omega)}\lesssim \omega(bD)^{1\over 2p_0} \|h\|_{L^1(bD, \omega)} \\
&\lesssim \omega(bD)^{1\over 2p_0} (\Omega^{-1}_2(bD))^{1\over2} \|h\|_{L^2(bD, \Ot)} 
\end{align*}
and that
\begin{align*}
\|[b, \Sopo] (\overline H)\|_{L^2(bD, \Ot)}\leq \|[b, \Sopo] (\overline H)\|_{ L^{2p_0}(bD, \omega) } \|\Omega_2\|_{L^{p'_0}(bD, \omega)}^{1\over2}\lesssim \|\overline H\|_{ L^{2p_0}(bD, \omega) } M(D,\omega)\,  \Omega_2(bD)^{1\over2}.
\end{align*}
Hence, we have
the norm 
\begin{align*}
  \| \tilde B_\epsilon g \|_{L^2(bD, \Ot)}  &\lesssim M(D,\omega) \Omega_2(bD)^{1\over2}  \|b\|_{{\rm BMO}(bD,\lambda)}(\Omega^{-1}_2(bD))^{1\over2} \|g\|_{L^2(bD, \Ot)}\\
  &\lesssim M(D,\omega) [\Omega_2]_{A_2}  \|b\|_{{\rm BMO}(bD,\lambda)}\|g\|_{L^2(bD, \Ot)},
\end{align*}
where the last inequality follows from the definition of the $A_2$ constant.

  To bound the norm of $C_\epsilon g$, we start by writing
  $$
  C_\epsilon g = \Sopo (\tilde H);\quad \tilde H:= [b, I - \big((\EuScript C^s_\epsilon)^\dagger - \EuScript C^s_\epsilon\big)]h;\quad
  h:= \Tinv g\, ,
  $$
  hence the conclusion of \cite[Theorem 1.1]{DLLW} (see (1.15) there) grants
  $$
  \|C_\epsilon g\|_{L^2(bD, \Ot)}\lesssim {  [\Ot]_{A_2}^3}\cdot \|\tilde H\|_{L^2(bD, \Ot)}.
  $$
  Furthermore, 
  $$
  \|\tilde H\|_{L^2(bD, \Ot)}\leq 
  \|
  [b, \EuScript C^s_\epsilon]h
  \|_{L^2(bD, \Ot)} +
   \|
  [b, (\EuScript C^s_\epsilon)^\dagger]h
  \|_{L^2(bD, \Ot)}.
  $$
  Now Theorem \ref{T:5.2} (for $p=2)$ with $\epsilon = \epsilon (\Ot)$ chosen as in the proof of \cite[Theorem 1.1]{DLLW} (see (4.1) there)
   gives that
  $$
  \| [b, \EuScript C^s_\epsilon]h\|_{L^2(bD, \Ot)}\leq C(\omega, D) \|b\|_{{\rm BMO}(bD, \lambda)}\cdot {  [\Ot]_{A_2}^4}\cdot \|h\|_{L^2(bD, \Ot)},
  $$
  and that
  $$
  \|h\|_{L^2(bD, \Ot)}\leq 2\|g\|_{L^2(bD, \Ot)}.
  $$
  Combining all of the above we obtain
  $$
  \| [b, \EuScript C^s_\epsilon]h\|_{L^2(bD, \Ot)}\leq 2C(\omega, D) \|b\|_{{\rm BMO}(bD, \lambda)} \cdot {  [\Ot]_{A_2}^4}\cdot\|g\|_{L^2(bD, \Ot)}.
  $$
  It now suffices to show that 
\begin{align}\label{claim b Cdag weighted}
\|
  [b, (\EuScript C^s_\epsilon)^\dagger]h
  \|_{L^2(bD, \Ot)}\leq C(\omega, D) \|b\|_{{\rm BMO}(bD, \lambda)} \cdot {  [\Ot]_{A_2}^4}\cdot\|h\|_{L^2(bD, \Ot)}.
\end{align}
 To see this, we first recall that from \cite[(5.7)]{LS2017}, $\EuScript C_\epsilon^s$ is given by 
$$\EuScript C_\epsilon^s(f)(z) = \EuScript C_\epsilon \big( f(\cdot) \chi_s(\cdot,z) \big)(z), \quad z\in bD$$
(see the proof of Proposition \ref{prop cancellation}). Recall also that $(\EuScript C^s_\epsilon)^\dagger= \varphi^{-1}(\EuScript C^s_\epsilon)^*\varphi$, where 
$\varphi$ is the density function of $\omega$ satisfying \eqref{E:LL-weights}.
Next, we observe that
 \begin{align*}
 [b, (\EuScript C^s_\epsilon)^\dagger](h)(x)
  &= b(x) \varphi^{-1}(x) (\EuScript C^s_\epsilon)^* (\varphi \cdot h)(x) -\varphi^{-1}(x) (\EuScript C^s_\epsilon)^* (b\cdot \varphi \cdot h)(x) \\
&= \varphi^{-1}(x) [b, (\EuScript C_\epsilon^s)^*]\Big(\varphi(\cdot)  h(\cdot) \Big)(x).
 \end{align*}
 
Thus, it suffices to show that $[b, (\EuScript C_\epsilon^s)^*]$ is bounded on $L^2(bD, \Ot)$. Assume that this is the case, then based on the fact that $\varphi$ is the density function of $\omega$ satisfying \eqref{E:LL-weights}, we obtain that
   \begin{align*}
 \|[b, (\EuScript C^s_\epsilon)^\dagger](h)\|_{L^2(bD, \Ot)} &=\Big \|\varphi^{-1} [b, (\EuScript C_\epsilon^s)^*]\Big(\varphi(\cdot)  h(\cdot) \Big)\Big\|_{L^2(bD, \Ot)}\\
 & \leq {M_{(D, \varphi)}\over m_{(D, \varphi)}} \|[b, (\EuScript C_\epsilon^s)^*]\|_{L^2(bD, \Ot)\to L^2(bD, \Ot)} \|h\|_{L^2(bD, \Ot)}.
 \end{align*}
 
Next, by noting that for any $\Omega_2\in A_2$, $f_1\in L^2(bD,\Omega_2)$ and $f_2\in L^2(bD, \Omega_2^{-1})$ (recall that $\Omega_2^{-1}$ is also an $A_2$ weight), we have
 \begin{align*}
\langle [b, (\EuScript C_\epsilon^s)^*](f_1), f_2 \rangle&= \int_{bD} [b, (\EuScript C_\epsilon^s)^*](f_1)(x) f_2(x)\, d\lambda(x)\\
&= \int_{bD} f_1(x)\ [b, (\EuScript C_\epsilon^s)^*]^*(f_2)(x) \, d\lambda(x)\\
&= \int_{bD} f_1(x) \psi_2^{1\over2}(x)\ [b, \EuScript C_\epsilon^s](f_2)(x)  \psi_2^{-{1\over2}}(x) \,d\lambda(x),
 \end{align*}
which gives that 
$\displaystyle{
|\langle [b, (\EuScript C_\epsilon^s)^*](f_1), f_2 \rangle|
\leq \|f_1\|_{L^2(bD,\Omega_2)}\| [b, \EuScript C_\epsilon^s](f_2) \|_{L^2(bD,\Omega_2^{-1})},
}$
 and therefore $$\|[b, (\EuScript C_\epsilon^s)^*]\|_{L^2(bD, \Ot)\to L^2(bD, \Ot)}\leq \|[b, \EuScript C_\epsilon^s]\|_{L^2(bD, \Ot^{-1})\to L^2(bD, \Ot^{-1})}. $$
Now Theorem \ref{T:5.2} (for $p=2)$ with $\epsilon = \epsilon (\Ot)$ chosen again as in the  proof of \cite[Theorem 1.1]{DLLW} gives that the right-hand side in the above inequality  is bounded by 
 $C(\omega, D) \|b\|_{{\rm BMO}(bD, \lambda)} { [\Ot^{-1}]_{A_2}^4}$, which, together with the fact that $[\Ot^{-1}]_{A_2}=[\Ot]_{A_2}$, leads to
    \begin{align*}
 \|[b, (\EuScript C^s_\epsilon)^\dagger](h)\|_{L^2(bD, \Ot)} 
 & \leq {M_{(D, \varphi)}\over m_{(D, \varphi)}} C(\omega, D) \|b\|_{{\rm BMO}(bD, \lambda)}{ [\Ot]_{A_2}^4} \|h\|_{L^2(bD, \Ot)}.
 \end{align*}

\vskip0.1in

We next prove the  necessity.  Suppose that $b\in L^2 (bD, \lambda)$ and that the commutator 
$[b,\Sopo]$ and $[b,\Cine](I-\Sopo)$ are bounded on $L^p(bD, \Omega_p)$ for some $1<p<\infty$ {and for some $A_p$-measure $\Op$} with the density function $\psi_p$. We aim to show that $b\in {\rm BMO}(bD, \lambda)$: {we will do so by proving that (for any arbitrarily fixed $0<\epsilon<\epsilon (D)$) the commutator $[b, \EuScript C_\epsilon^*]$ is bounded on $L^{p'}(bD, \Omega_{p'})$ where $1/p +1/p'=1;$  $\Op' := \Op^{-{1\over p-1}}$, and $\EuScript C_\epsilon^*$ is the $L^2(bD, \sigma)$-adjoint of $\EuScript C_\epsilon$; the desired conclusion will then follow by Theorem \ref{T:5.2}.

{ 
For every $f$ in the H\"older-like space \eqref{E:Holder}, 
by \eqref{E:cond2} we see that 
\begin{align*}
 &\|[b, \EuScript C_\epsilon]\Sopo \|_{L^p(bD, \Omega_p)\to L^p(bD, \Omega_p)}\\
 & \leq 
 \Big( 1+ \|\Cine \|_{L^p(bD, \Omega_p)\to L^p(bD, \Omega_p)} \Big) \|[b,  \Sopo ]\|_{L^p(bD, \Omega_p)\to L^p(bD, \Omega_p)}.\nonumber
\end{align*}
This, together with the assumption that $[b,\Cine](I-\Sopo)$ is bounded on $L^p(bD, \Omega_p)$, gives that
$[b,\Cine]$ is bounded on $L^p(bD, \Omega_p)$ with 
\begin{align}\label{E:cond4 weighted}
 &\|[b, \EuScript C_\epsilon] \|_{L^p(bD, \Omega_p)\to L^p(bD, \Omega_p)}\\
 & \leq 
 \Big( 1+ \|\Cine \|_{L^p(bD, \Omega_p)\to L^p(bD, \Omega_p)} \Big) \|[b,  \Sopo ]\|_{L^p(bD, \Omega_p)\to L^p(bD, \Omega_p)}\nonumber\\
 &\qquad+\|[b,  \Cine ](I-\Sopo)\|_{L^p(bD, \Omega_p)\to L^p(bD, \Omega_p)}.\nonumber
\end{align}
}

 Now the adjoints of $[b, \EuScript C_\epsilon]$ in $L^2(bD,\omega)$ and in $L^2(bD, \sigma)$ (respectively denoted by upper-scripts $\dagger$ and $*$) are related to one another via the identity
$ [b, \EuScript C_\epsilon]^\dagger = \varphi^{-1} [b, \EuScript C_\epsilon]^{\!*}\varphi$,
{where}
$\varphi$ and its reciprocal $\varphi^{-1}$ satisfy
\eqref{E:Leray Levi to sigma}.
Since $[b, \EuScript C_\epsilon]^\dagger $ is bounded on $L^{p'}(bD,\Omega'_p)$ 
and $\varphi$ has positive and finite upper and lower bounds on $bD$, we obtain that
$[b, \EuScript C_\epsilon]^{\!*} $ is also bounded on $L^{p'}(bD,\Omega'_p)$ and moreover,
$$  \|[b, \EuScript C_\epsilon]^{\!*} \|_{L^{p'}(bD, \Omega'_p)\to L^{p'}(bD, \Omega'_p)}  \lesssim  \|[b, \EuScript C_\epsilon]^\dagger \|_{L^{p'}(bD, \Omega'_p)\to L^{p'}(bD, \Omega'_p)}\leq\|[b, \EuScript C_\epsilon] \|_{L^p(bD, \Omega_p)\to L^p(bD, \Omega_p)}.  $$

{But  $[b, \EuScript C_\epsilon]^{\!*} = [b, \EuScript C_\epsilon^{*}]$, hence the conclusion $b\in \rm{BMO}(bD, \lambda)$} and the desired bound:
 \begin{align*}   \|b\|_{{\rm BMO}(bD, \lambda)}&\lesssim c_\epsilon [\Op]_{A_p}^{  \max\{1,{1\over p-1}\}} 
 \|[b,  \Sopo ]\|_{L^p(bD, \Omega_p)\to L^p(bD, \Omega_p)} \\
 &\qquad+\|[b,  \Cine ](I-\Sopo)\|_{L^p(bD, \Omega_p)\to L^p(bD, \Omega_p)}
 \end{align*}
 follow from Theorem \ref{T:5.1} and \eqref{E:cond4 weighted}.
The proof of Part (1) is concluded.

\begin{proof}[Proof of Part \textnormal{(2)}] This
  follows a similar approach to the proof of (2) of Theorem \ref{cauchy1}
with standard modifications which can be seen from the proof of (1) above and the extrapolation compactness on weighted Lebesgue spaces \cite{Hy1}; we omit the details.
\vskip0.1in
The proof of Theorem \ref{cauchy4} is complete. 
\end{proof}

 \medskip
\subsection{The commutator of $\SopO$: proof of Theorem \ref{cauchy2}} 
As before, the superscript $\spadesuit$ designates the adjoint with respect to the inner product $\langle \cdot,\cdot\rangle_{\Omega_2}$ of $L^2(bD,\Omega_2).$ 
Thus, $\SopO$ is the orthogonal projection of $L^2(bD,\Omega_2)$ onto $H^2(bD,\Omega_2)$ in the sense that
$$\SopO^\spadesuit = \SopO,$$
where $H^2(bD,\Omega_2)$ is the holomorphic Hardy
and the 
$\SopO^\spadesuit$ denotes the adjoint of $\SopO$ in $L^2(bD,\Omega_2)$.

To begin with, we first point out that if $b$ is in ${\rm BMO}(bD,\lambda)$, then $b$ is in $L^2(bD, \Omega_2)$,
where $\Omega_2$ has the density function $\psi\in A_2$. 
Then, following the result in \cite[Section 5.2]{HT} (see also \cite[Theorem 3.1]{Ho} in $\mathbb R^n$), we see that 
$$
{\rm BMO}(bD,\lambda)={\rm BMO}_{L^p_{\Omega_2}}(bD,\lambda)
$$ 
for all $1\leq p<\infty$ and the norms are mutually equivalent, where ${\rm BMO}_{L^p_{\Omega_2}}(bD,\lambda)$ is the space of all $b\in L^1(bD,\lambda)$ such that 
$$ 
\|b\|_{*,\Omega_2} :=\sup_B \bigg({1\over \Omega_2(B)}\int\limits_B |b(z)-b_B|^p d\Omega_2(z)   \bigg)^{1\over p}<\infty,\quad b_B= {1\over \lambda(B)}\int\limits_{B } b(w)d\lambda(w),
$$
and $\|b\|_{{\rm BMO}_{L^p_{\Omega_2}}(bD,\lambda)} = \|b\|_{*,\Omega_2} + \|b\|_{L^1(bD,\lambda)}$.  Since $bD$ is compact, we see that $b\in L^p(bD,\Omega_2)$ for $1\leq p<\infty$, that is
\begin{equation}\label{E:incl}
{\rm BMO}(bD, \lambda)\subset L^p(bD, \Omega_2)\quad \mathrm{for\ any}\ \ 1\leq p<\infty.
\end{equation}

We split the proof into two parts.

\smallskip
\noindent {\it Proof of Part \textnormal{(1)}}.
We first prove the sufficiency.   We suppose that $b$ is in ${\rm BMO}(bD,\lambda )$ and show that $[b, \EuScript  S_{\Omega_2}]:\ L^2(bD, \Omega_2)\to L^2(bD, \Omega_2)$ for every $\varphi \in A_2$ with 
\begin{equation}\label{E:N(s)}
 \|[b, \EuScript  S_{\Omega_2}]\|_{2}
 \lesssim N([\psi]_{A_2}), 
 \end{equation}
where $N(s)$ is a positive increasing function on $[1,\infty)$.

We start with the following basic identity 
\begin{align}\label{e4.1}
\EuScript  S_{\Omega_2}{\EuScript  C_\epsilon^\spadesuit} (f)
=   ( \EuScript  C_\epsilon \EuScript  S_{\Omega_2}^\spadesuit)^\spadesuit (f) =  (\EuScript  C_\epsilon \EuScript  S_{\Omega_2})^\spadesuit (f) = (\EuScript  S_{\Omega_2})^\spadesuit (f) = \EuScript  S_{\Omega_2} (f), 
\end{align}
which is valid for any $f\in L^2(bD,\Omega_2 )$ and for any $\epsilon$ (whose value is not important here).

It follows from \eqref{e4.1} that 
\begin{align}\label{e4.2}
\EuScript  S_{\Omega_2} [b,T_{\epsilon,\Omega_2}](f) = \EuScript  S_\omega bT_{\epsilon,\Omega_2} (f) =  \EuScript  C_\epsilon( b f),
\end{align}
where
$$T_{\epsilon,\Omega_2}: = I - (\EuScript  C_\epsilon^\spadesuit-\EuScript  C_\epsilon )$$
and 
$f$ is any function taken in the H\"older-like space \eqref{E:Holder}.  
On the other hand, the classical Kerzman--Stein identity  \cite{KS}
\begin{equation}\label{E:KS 4.3}
  \SopO T_{\epsilon,\Omega_2} f
  =\EuScript C_\epsilon f,\quad f\in L^2(bD, \Omega_2),
 \end{equation}
 gives that
  \begin{equation}\label{E:1 4.4}
  b  \SopO T_{\epsilon,\Omega_2} f= b \EuScript C_\epsilon f,\quad f\in L^2(bD, \Omega_2).
  \end{equation}
 Combining \eqref{e4.2} and \eqref{E:1 4.4}  we obtain
 \begin{equation}\label{E:dense1 4.5}
 [ b, \SopO ] T_{\epsilon,\Omega_2} f = \big([b , \EuScript C_\epsilon ]\, +\, \SopO  [b, T_{\epsilon,\Omega_2} ]\big)f 
 \end{equation}
 whenever $f$ is in the H\"older-like space \eqref{E:Holder}. We now point out that the righthand side of \eqref{E:dense1 4.5} is meaningful in $L^2(bD,\Omega_2)$ by the same argument as before. We observe here that
\begin{align}\label{b T e w} 
[b, T_{\epsilon,\Omega_2} ] =  [b, I] - [b, \EuScript  C_\epsilon^\spadesuit] +[b, \EuScript  C_\epsilon]=- [b, \EuScript  C_\epsilon^\spadesuit] +[b, \EuScript  C_\epsilon]
\end{align}
and by (i) in Theorem \ref{T:5.2},
we get that $ [b, T_{\epsilon,\Omega_2} ] $  is also bounded on $L^2(bD,\Omega_2)$.   

Furthermore, we have  that $T_{\epsilon,\Omega_2}$ is invertible in $L^2(bD, \Omega_2)$ by the analogous two facts as in the proof of Theorem \ref{cauchy1}: 
(1.), $\EuScript C_\epsilon$ and $\EuScript C_\epsilon^\spadesuit$ are bounded in $L^2(bD, \Omega_2)$ and (2.),
 $T_{\epsilon,\Omega_2}$ is skew adjoint (that is, $(T_{\epsilon, \Omega_2})^\spadesuit = - T_{\epsilon, \Omega_2}$).
 We conclude that
 \begin{equation}\label{E:dense2 4.6}
 [ b, \SopO ] g = \big([b , \EuScript C_\epsilon ]\, +\, \SopO  [b, T_{\epsilon,\Omega_2} ]\big)\circ T_{\epsilon,\Omega_2}^{-1} g,\quad g\in L^2(bD, \Omega_2).
 \end{equation}
But the righthand side of \eqref{E:dense2 4.6} is bounded in $L^2(bD, \Omega_2)$ and 
\begin{align}\label{bS L2norm 4.7}  
\|[b, \SopO]\|_{2}&\lesssim \|T_{\epsilon,\Omega_2} ^{-1}\|_{2}\  \|[b, \EuScript C_\epsilon]\|_{2}\ \big (1+ \|\SopO\|_{2} \big) \\
&\lesssim \|T_{\epsilon,\Omega_2} ^{-1}\|_{2} [\Op]_{A_2}^{2}\|b\|_{{\rm BMO}(bD,\lambda)},\nonumber
\end{align} 
where the last inequality follows from (i) in Theorem \ref{T:5.2} and the fact that
$ \|\SopO\|_{2}=1$ by the definition of $\SopO$.

Hence we see that \eqref{E:N(s)} holds with $N(s):= Cs^2$ and  $C:=\|T_{\epsilon,\Omega_2} ^{-1}\|_{2}\|b\|_{{\rm BMO}(bD,\lambda)}$.

\vskip0.1in

We next prove the necessity. Suppose that $b$ is in $L^2(bD,\lambda)$ and that the commutator $[b,\SopO]:L^2(bD,\Omega_2)\to L^2(bD,\Omega_2)$ is bounded.

Repeating the same steps in the proof of the necessity part in Theorem \ref{cauchy1}, we see that
$[b, \EuScript C_\epsilon]$ is bounded from $L^{2}(bD,\Omega_2 )$ to 
$L^{2}(bD,\Omega_2 )$
 with
\begin{align}\label{Cdag bd by S}
 \|[b, \EuScript C_\epsilon] \|_{2 } 
 &\lesssim  
\|I- \EuScript C_\epsilon\|_{2 } \|[b,  \Sopo  ]\|_{2 },
 \end{align}
 where $\|I- \EuScript C_\epsilon\|_{2 }<\infty$ follows from Theorem \ref{T:5.1}.
 
 Then, by using (i) in Theorem \ref{T:5.2} (simply noting that $b\in L^2(bD,\Omega_2 )$ implies that $b\in L^1(bD,\lambda )$ since $\Omega_2^{-1}(bD)<\infty$), we obtain that $b$ is in ${\rm BMO}(bD,\lambda)$ with 
$ \|b\|_{{\rm BMO}(bD,\lambda)}\lesssim  \|[b, \EuScript C_\epsilon] \|_{2}$,
which, together with \eqref{Cdag bd by S}, gives
$$ \|b\|_{{\rm BMO}(bD,\lambda)}\lesssim  \|I- \EuScript C_\epsilon\|_{2} \|[b,  \Sopo  ]\|_{ 2}.$$

\vskip0.1in

{\em Proof of Part \textnormal{(2)}}. To prove the sufficiency, we assume that
$b$ is in ${\rm VMO}(bD,\lambda)$ and we aim to prove that $[b, \SopO]$ is compact on $L^2(bD, \Omega_2)$.

In fact, the argument that $[b, \SopO]$  is compact on $L^2(bD, \Omega_2)$ is immediate from \eqref{E:dense2 4.6}, which shows that  $[b, \SopO]$ is the composition of compact operators (namely $[b , \EuScript C_\epsilon ]$ and $[b, T_{\epsilon,\Omega_2} ]$, by (ii) of Theorem \ref{T:5.2}) with the bounded operators $T^{-1}_{\epsilon,\Omega_2}$ (by the results of \cite{LS2017}) and $\SopO$.

\vskip0.1in

To prove the necessity, we suppose that $b\in {\rm BMO}(bD,\lambda)$ and that
 $[b,\SopO]$ is compact on $L^2(bD, \Omega_2)$, and we show that $b\in {\rm VMO}(bD,\lambda)$.
To this end, we note that \eqref{Cdag bd by S} shows that 
$$
[b,\EuScript C_\epsilon]: L^{2}(bD, \Omega_2)\to L^{2}(bD, \Omega_2)
$$
is compact. But this implies that $b\in {\rm VMO}(bD, \lambda)$ by (ii)
of Theorem \ref{T:5.2}.

\medskip
The proof of Theorem \ref{cauchy2} is concluded.
\qed

\vskip0.1in

\vskip0.1in
\vskip0.1in
\vskip0.1in

{\bf Acknowledgements.}
X. Duong and Ji Li are supported by the Australian Research Council (award no. DP 220100285). L. Lanzani is 
a member of the INdAM group GNAMPA. 
B.\,D. Wick
is partially supported by the National Science Foundation (award no. DMS--1800057) and the Australian Research Council, award no. DP 220100285. We thank the mathematical research institute MATRIX in Australia where part of this research was performed.

\vskip0.1in
\bibliographystyle{amsplain}

\end{document}